\newtheorem{theorem}{Theorem}[section]
\newtheorem{corollary}[theorem]{Corollary}
\newtheorem{lemma}[theorem]{Lemma}
\newtheorem{proposition}[theorem]{Proposition}
\newcommand{\ignore}[1]{}
\title{Fibonacci-run graphs I: basic properties}
\date{\today}
\author{\"Omer E\u{g}ecio\u{g}lu}
\address{Department of Computer Science, University of California Santa Barbara 
\\Santa Barbara, California 93106, USA }
\email{omer@cs.ucsb.edu}
\author{Vesna Ir\v{s}i\v{c}}
\address{Faculty of Mathematics and Physics, University of Ljubljana
\\Slovenia \newline
Institute of Mathematics, Physics and Mechanics, Ljubljana, Slovenia}
\email{vesna.irsic@fmf.uni-lj.si}
\newcommand{\R}{\mathcal{R}}
\DeclareMathOperator{\adeg}{\overline{deg}}
\DeclareMathOperator{\diam}{diam}
\begin{document}
	
%\maketitle

\begin{abstract}
Among the classical models for interconnection networks are hypercubes and Fibonacci cubes. 
Fibonacci cubes are induced subgraphs of hypercubes obtained by 
restricting the vertex set to 
those binary strings which do not contain consecutive 1s, 
counted by Fibonacci numbers. 
Another set of binary strings which are counted by Fibonacci numbers are those with 
a restriction on the runlengths.
Induced subgraphs of the hypercube on the
latter strings as vertices define {\em Fibonacci-run 
	graphs}. They have the same number of vertices as Fibonacci cubes, but fewer edges and
different connectivity properties.

We obtain properties of Fibonacci-run graphs including the number of edges, the analogue of the fundamental recursion,
the average degree of a vertex, 
Hamiltonicity, and special degree sequences, the number of hypercubes they contain. A detailed study of the 
degree sequences of Fibonacci-run graphs is interesting in its own right and is reported 
in  a companion paper.

\bigskip\noindent
\textbf{Keywords:} Hypercube, Fibonacci cube, Fibonacci number. 

\medskip\noindent
\textbf{AMS Math.\ Subj.\ Class.\ (2020)}: 05C75, 05C30, 05C12, 05C40, 05A15

\end{abstract}
\maketitle
%%%%%%%%%%%%%%%%%%%%%%%%%%%%%%%%%%%%%%%%%%
%%%%%%%%%%%%%%%%%%%%%%%%%%%%%%%%%%%%%%%%%%
\section{Introduction}
\label{sec:intro}
%%%%%%%%%%%%%%%%%%%%%%%%%%%%%%%%%%%%%%%%%%
%%%%%%%%%%%%%%%%%%%%%%%%%%%%%%%%%%%%%%%%%%

The \emph{$n$-dimensional hypercube} $Q_n$ is the graph on the vertex set $$\{0, 1\}^n = \{ v_1 v_2 \ldots v_n ~|~ \; v_i \in \{0,1\} \},$$ where 
two vertices $v_1 v_2 \ldots v_n$ and $u_1 u_2 \ldots u_n$ are adjacent if $v_i \neq u_i$ for exactly 
one index $i \in [n]$. 
In other words, vertices of $Q_n$ are all possible strings of length $n$ consisting only of $0$s and $1$s, 
and two vertices are adjacent if and only if they differ in exactly one coordinate or ``bit''. 
Clearly, $|V(Q_n)| = 2^n$, and $|E(Q_n)| = n 2^{n-1}$.

A well studied subfamily of hypercubes are \emph{Fibonacci cubes} $\Gamma_n$ 
which were introduced by Hsu~\cite{hsu1993} as an alternate interconnection topology. 
The vertex set $\Gamma_n$ consists of the
\emph{Fibonacci strings} of length $n$, 
$$\mathcal{F}_n = \{ v_1 v_2 \ldots v_n \in \{0,1\}^n ~|~ \; v_i v_{i+1} = 0 ~, 
i \in [n-1]\}~,$$
and two vertices are adjacent if and only if they differ in exactly one coordinate. 
In other words, 
$\Gamma_n$ is the subgraph of $ Q_n$, 
induced by the vertices that do not contain consecutive $1$s. 
This family of graphs has proved to have an interesting structure and was 
studied extensively. 

As mentioned in the seminal paper, the Fibonacci cubes can be viewed as 
an interconnection topology and can be applied to fault-tolerant computing~\cite{hsu1993}. 
A survey of a variety of properties of Fibonacci cube graphs~\cite{klavzar2013-survey} 
presents not just the
fundamental decomposition of Fibonacci cubes, 
but also its Hamiltonian properties, and many enumeration results. 
For some very recent results, see~\cite{savitha+2020, saygi+2019, azarija+2018, mollard2017}. Following similar ideas, other graph families have also been introduced and studied, such as Lucas cubes~\cite{munarini+2001}, generalized Fibonacci cubes~\cite{ilic+2012}, 
$k$-Fibonacci cubes~\cite{Egecioglu2020} and daisy cubes~\cite{klavzar+2019}.

Note that a Fibonacci cube can equivalently be defined 
by adding $00$ to the end of the binary representation 
of every vertex. 
We can call such binary strings {\em extended Fibonacci strings}.
Actually, in an extended Fibonacci string,
the rightmost zero corresponds to the Fibonacci 
number  $f_0$ and the second to last zero corresponds to $ f_1$ in the 
encoding of the so called Zeckendorf or canonical representation 
of integers~\cite{zeckendorf1972}. In this representation, $f_0$ is not needed since it is zero, and $f_1$ is not needed because $ f_2$ is 
already 1, and the inclusion of another 1 would
prevent uniqueness of the Zeckendorf expansion. We can 
call this representation with the additional 0s the {\em extended Zeckendorf representation.}

With this interpretation
%In light of this, we let 
$$
V(\Gamma_n) = \{ w 00 ~|~ w \in \mathcal{F}_n \}
$$ 
and two vertices are adjacent 
if they differ in exactly one coordinate. 
Recall that the \emph{Hamming distance} $H(u,v)$ between strings $u, v \in \{0,1\}^n$ is the number of coordinates 
in which $u$ and $v$ differ. Using this, the edge set of $\Gamma_n$ can be described as 
$$
E(\Gamma_n) = \{ \{ u00, v00 \} ~|~ H ( u,v ) = 1 \} ~.
$$

Observe that extended  Fibonacci strings defined above, 
together with the null word $ \lambda$ and the singleton $0$, 
%($V(\Gamma_n)$ as defined above) 
are generated freely (as a monoid) by
the infinite alphabet
$$
F = 0, 100, 10100, 1010100, \ldots
$$
This means that every $v \in V(\Gamma_n)$ can be written 
uniquely as a concatenation of zero or more strings from $F$.

With this in mind we introduce a new family of graphs. Instead of 
considering extended Fibonacci strings as the vertex set to define our communication network, 
we consider \emph{run-constrained binary strings}. 
These are strings of $0$s and $1$s, in which every run (sometimes called a {\em  block}) 
of $1$s appearing in the word is immediately followed by a strictly longer run of $0$s. 
Such run-constrained strings, together with the null word $\lambda$ and the singleton $0$, 
are generated freely by the letters from the alphabet
\begin{equation}\label{R}
R = 0,100, 11000,1110000, \ldots 
\end{equation}
Note that run-constrained strings of length $ n \geq 2$ must end with $00$. 

These strings  allow us to define the \emph{Fibonacci-run graph} $\R_n$, parametrized by 
$ n \geq 0$, in the following fashion. The vertex set of $\R_n$ is 
$$
V(\R_n) = 
\{ w 00 ~|~ w00 \mbox{ is a run-constrained string of length $n+2$} \}~,
$$ 
and its edge set is 
$$
E(\R_n) = \{ \{ u00, v00 \} ~|~ H ( u,v ) = 1 \}~.
$$
The term Fibonacci-run graph makes sense, 
since $|V(\R_n)| = |V(\Gamma_n)|$ is 
the $(n+2)$-th Fibonacci number $f_{n+2}$, as we will later show in Section~\ref{sec:basic}.

Clearly, $\R_n$ is a subgraph of $Q_{n+2}$, but it 
is more natural to see it as a subgraph of $Q_n$ (after suppressing the tailing $00$ in the 
vertices of $\R_n$). In fact, from now on, we will view the vertices of a Fibonacci-run 
graph without the trailing pair of zeros as 

$$ 
V(\R_n) = 
\{ w ~|~ w00 \mbox{ is a run-constrained binary string of length $n+2$}\} ~.
$$
This is the same kind of a convention as viewing $\Gamma_n$ as a subgraph of $ Q_{n+2}$ if 
we think of the vertices as extended Fibonacci strings, or as a subgraph of $Q_n$ as usual by 
suppressing the trailing 00 of the vertex labels in the extended Zeckendorf representation.
%Fibonacci strings.

Additionally, we define 
%(since it will be useful later) 
$\R_0$ to be isomorphic to $K_1$, and its only vertex corresponding to the label $00$, 
which after the removal of the trailing pair of zeros
corresponds to the null word.

We start the study of the Fibonacci-run graphs $ \R_n$ by considering the basic properties 
of these graphs, such as the number of edges, diameter, decomposition, Hamiltonicity, 
asymptotic average degree and compare these graph parameters with the known results for $ \Gamma_n$.

The rest of the paper is organized as follows.
After the preliminaries in Section~\ref{sec:prelim}, we consider the nature of the 
run-constrained binary strings in Section~\ref{sec:basic}, and calculate the number of 
edges of $\R_n$. This is followed by the graph-theoretic decomposition of $\R_n$, which is actually 
what we make use of for our 
results on the number of edges. In Section~\ref{sec:diam}, we consider the diameter of Fibonacci-run graphs,
followed by asymptotic results in Section~\ref{sec:asymptotic}. In Section~\ref{sec:degree},
we use the properties of run-constrained strings to calculate the generating functions of
the distribution of vertices with special types of degrees in $\R_n$. 
In Section~\ref{sec:hypercubes}, we prove that
Fibonacci-run graphs are partial cubes for only small values of $n$, and provide a count of the 
induced hypercubes $Q_k$ contained in $\R_n$. Finally, in Section
\ref{sec:hamilton} we study the Hamiltonicity of $\R_n$, and 
conclude with conjectures and further directions for research. 
Figures of a few large Fibonacci-run graphs are given in the 
Appendix.

%%%%%%%%%%%%%%%%%%%%%%%%%%%%%%%%%%%%%%%%%%
%%%%%%%%%%%%%%%%%%%%%%%%%%%%%%%%%%%%%%%%%%
\section{Preliminaries}
\label{sec:prelim}
%%%%%%%%%%%%%%%%%%%%%%%%%%%%%%%%%%%%%%%%%%
%%%%%%%%%%%%%%%%%%%%%%%%%%%%%%%%%%%%%%%%%%

In this section, we give definitions, provide notation, and indicate the 
known results that are needed in the paper. 
To avoid possible confusion, recall that \emph{Fibonacci numbers} are defined as 
$f_0 = 0$, $f_1 = 1$, and $f_n = f_{n-1} + f_{n-2}$ for $n \geq 2$. 
They are closely related to the \emph{golden ratio}
$\varphi = (1 + \sqrt{5})/2$, as  $\lim_{n \to \infty} f_{n+1}/f_n = \varphi$. 
In general, it can be shown that 
$\lim_{n \to \infty} f_{n+k}/f_n = \varphi^k$ for any integer $k$.

The \emph{Hamming weight} of a binary string $u$ is 
the number of $1$s in $u$, denoted by $|u|_1$. If $a, b$ are strings, 
then $ab$ denotes the concatenation of those two strings. 
Similarly, for a set of strings $B$, we set $aB = \{ab ~|~ \; b \in B\}$.

The degree of a vertex $v$ is the number of neighbors of $v$,
denoted by $\deg(v)$. Since each neighbor of $v \in V(\R_n)$ is obtained either by changing a $0$ in $v$ into a $1$ 
or by changing a $1$ in $v$ into a $0$, we can distinguish between the 
\emph{up-degree} $\deg_{up} (v)$, and \emph{down-degree} $\deg_{down} (v)$ of the vertex $v$. 
The first is the number of neighbors of $v$ which are obtained by switching a 
$0$ in $v$ into a $1$, while the second is the number of neighbors obtained by 
switching a $1$ in $v$ into a $0$. 
Clearly, $\deg(v) = \deg_{up} (v) + \deg_{down}(v)$. 
Note that in $ \R_n$, $\deg_{down}(v)$ is not necessarily 
equal to the Hamming weight of $v$ because of 
the constraints on the run-lengths that must hold. 

We keep track of the degree sequences of our graphs $\R_n$ as the 
coefficients of a polynomial. This polynomial
is called 
the \emph{degree enumerator polynomial} of the graph. The coefficient of $x^i$ in 
the degree enumerator polynomial
is
the number of vertices of degree $i$ in $\R_n$. Similar polynomials are defined to keep track of the 
up- and down-degree sequences as well (see the beginning of Section~\ref{sec:degree} for an example).

The average degree of a graph $G$ is
$$
\adeg(G) = \frac{1}{|V(G)|} \sum_{v \in V(G)} \deg(v) 
= \frac{2 |E(G)|}{|V(G)|}~.
$$ 

The \emph{diameter} of $G$ is the maximum
distance between pairs of vertices of the graph, that 
is $$\diam(G) = \max \{ d(u,v) ~|~ \; u, v \in V(G)\}~.$$
We use the notation $ d_G(u,v)$ when we need to emphasize that the distance is relative to $G$.

A \emph{partial cube} is an isometric subgraph of a hypercube. Recall that a subgraph $H$ 
of a graph $G$ is an \emph{isometric subgraph} if for every $u, v \in V(H)$ we have 
$d_H(u,v) = d_G(u,v)$. A graph $G$ is a \emph{median graph} if every triple $u, v, w$ of its vertices 
has a unique median $x$, that is a vertex $x$ with the properties 
$d(u,x) + d(x,v) = d(u,v)$, $d(v,x) + d(x,w) = d(v,w)$, and $d(u,x) + d(x,w) = d(u,w)$. 
Recall that hypercubes are median graphs, and that median graphs are partial cubes.

%%%%%%%%%%%%%%%%%%%%%%%%%%%%%%%%%%%%%%%%%%
%%%%%%%%%%%%%%%%%%%%%%%%%%%%%%%%%%%%%%%%%%
\section{The number of edges}
\label{sec:basic}
%%%%%%%%%%%%%%%%%%%%%%%%%%%%%%%%%%%%%%%%%%
%%%%%%%%%%%%%%%%%%%%%%%%%%%%%%%%%%%%%%%%%%

\begin{lemma}
	\label{lem:vertices-bij}
	If $n \geq 1$, then $|V(\R_n)| =  f_{n+2}$.
\end{lemma}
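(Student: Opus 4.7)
The plan is to reduce $|V(\R_n)|$ to a count of run-constrained strings of a fixed length, and then extract Fibonacci numbers from the free monoid structure on $R$ given in~(\ref{R}).

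First I would observe that every run-constrained string of length at least $2$ automatically ends in $00$: either the string is all zeros (of length $\geq 2$), or it contains a $1$, in which case the final block of $0$s follows a block of $1$s of length $\geq 1$ and hence has length at least $2$. Consequently, the map $w \mapsto w00$ is a bijection between $V(\R_n)$ and the set of all run-constrained strings of length $n+2$, so it suffices to show that the latter has cardinality $f_{n+2}$.

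Next, the free monoid structure tells us that run-constrained strings of length $m$ are in length-preserving bijection with compositions of $m$ into odd parts, since the $k$-th letter $1^{k-1}0^k$ of $R$ has length $2k-1$. I would then verify that the number $c_m$ of such compositions satisfies $c_m = c_{m-1} + c_{m-2}$ by inspecting the last part: if it equals $1$, remove it; if it is at least $3$, subtract $2$ from it. Together with the initial values $c_1 = c_2 = 1$, this recurrence forces $c_m = f_m$, so $|V(\R_n)| = c_{n+2} = f_{n+2}$. Alternatively, one could bypass the composition step by noting that the generating alphabet $F$ for extended Fibonacci strings has letters of the same lengths $1,3,5,\ldots$, so matching $k$-th generators yields a length-preserving bijection between $V(\Gamma_n)$ and the run-constrained strings of length $n+2$, and then invoking the known value $|V(\Gamma_n)| = f_{n+2}$.

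There is no real obstacle here; the only point that deserves care is the opening claim that every run-constrained string of length $\geq 2$ ends in $00$, which is extracted directly from the definition of the generators in~(\ref{R}). Once that reduction is in place, the Fibonacci count is forced by the shape $1,3,5,\ldots$ of the generator lengths.
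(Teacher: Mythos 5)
Your proof is correct, but your primary route differs from the paper's. The paper proves the lemma by defining an explicit length-preserving bijection $\Phi$ between the generating alphabets $F$ and $R$ (sending the $k$-th letter of each to the $k$-th letter of the other, both of length $2k-1$), extending it via unique factorization to a bijection $V(\Gamma_n) \to V(\R_n)$, and then quoting the known value $|V(\Gamma_n)| = f_{n+2}$ --- this is exactly the ``alternative'' you sketch in your last sentence before the closing remarks. Your main argument instead counts run-constrained strings directly: the free monoid structure identifies them with compositions of $n+2$ into odd parts, and the last-part case analysis gives the Fibonacci recurrence $c_m = c_{m-1}+c_{m-2}$ with $c_1=c_2=1$. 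This is self-contained (it does not rely on the enumeration of $\Gamma_n$) at the cost of being purely numerical, whereas the paper's bijection $\Phi$ is reused immediately afterwards --- it preserves Hamming weight, which is what Corollary~\ref{cor:vertices-weight} needs --- so the explicit map earns its keep beyond this lemma. Your opening observation that every run-constrained string of length at least $2$ ends in $00$ is correct and worth stating (the paper asserts it without proof just after displaying the alphabet $R$); your justification via the last run of $1$s being followed by a strictly longer run of $0$s is sound.
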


\begin{proof}
Since we know that 
	$|V(\Gamma_n)| = f_{n+2}$~\cite{hsu1993, klavzar2013-survey}, it suffices to provide a bijection between vertex sets of $\Gamma_n$ and $\R_n$. 
For the purpose of this proof, we view both vertex sets with additional $00$ at the end of each string. 
	
	An explicit bijection between Fibonacci strings and run-constrained strings
	is obtained by setting
	\begin{eqnarray*}
		\Phi(0) &=&0 \\
		\Phi(100) &=&100 \\
		\Phi(10100) &=&11000 \\
		\Phi(1010100) &=&1110000  , 
	\end{eqnarray*}
	etc., a bijection between the alphabets $F$ and $R$, and extending $\Phi$ to full words via the unique factorization.
	So for example, starting with a Fibonacci string $w = 10010100001010100100 $, we find $\Phi(w)$ as 
	\begin{eqnarray*}
		\Phi \big(w \big)
		& = & \Phi \Big((100)(10100)(0)(0)(1010100)(100)\Big)\\
		&=& (100)(11000)(0)(0)(1110000)(100) \\
		& = & 10011000001110000100.
	\end{eqnarray*}
	This clearly yields a bijection, thus $|V(\R_n)| = |V(\Gamma_n)| = f_{n+2}$.
\end{proof}

\begin{corollary}
	\label{cor:vertices-weight}
	The number of vertices in $\R_n$ of Hamming weight $w$, $0 \leq w \leq \lceil n/2 \rceil$ is 
	$$\binom{n-w+1}{w}~.$$
\end{corollary}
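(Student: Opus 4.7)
The plan is to piggyback on the bijection $\Phi$ constructed in the proof of Lemma~\ref{lem:vertices-bij}, by observing that $\Phi$ actually preserves Hamming weight. Specifically, for each $j \ge 0$, the $(j{+}1)$-th letter of $F$ is $(10)^j 0$, of length $2j+1$ and weight $j$, and it is sent by $\Phi$ to the $(j{+}1)$-th letter of $R$, namely $1^j 0^{j+1}$, which is also of length $2j+1$ and weight $j$. So on letters $\Phi$ preserves both length and weight. Since words in $F^*$ and $R^*$ factor uniquely, the extension of $\Phi$ to arbitrary strings is automatically length- and weight-preserving.

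Consequently, the number of vertices of $\R_n$ with Hamming weight $w$ equals the number of vertices of $\Gamma_n$ with Hamming weight $w$. For the latter, I would invoke the classical count: the number of binary strings of length $n$ with exactly $w$ ones and no two consecutive ones is $\binom{n-w+1}{w}$. This follows from the standard stars-and-bars argument of lining up the $n-w$ zeros and choosing $w$ of the $n-w+1$ resulting gaps in which to place the isolated $1$s.

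The stated range $0 \le w \le \lceil n/2 \rceil$ is then automatic, as $\binom{n-w+1}{w} = 0$ as soon as $w > n-w+1$, i.e.\ $w > \lceil n/2 \rceil$. There is essentially no obstacle: the only point worth checking is the weight-preservation of $\Phi$, which is visible letter-by-letter. A purely direct approach, enumerating run-constrained strings of length $n+2$ by their $1$-run profile $(j_1,\dots,j_k)$ with each run consuming at least $2j_i+1$ positions and then massaging the resulting composition count into $\binom{n-w+1}{w}$, would also succeed but is considerably less clean than the reduction via Lemma~\ref{lem:vertices-bij}.
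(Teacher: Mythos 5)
Your proof is correct and follows essentially the same route as the paper: it reduces the count to the weight-$w$ vertices of $\Gamma_n$ via the bijection $\Phi$ of Lemma~\ref{lem:vertices-bij}, noting that $\Phi$ preserves Hamming weight, and then invokes the classical formula $\binom{n-w+1}{w}$ for Fibonacci strings. The only difference is that you spell out the letter-by-letter verification of weight preservation and the stars-and-bars count, which the paper leaves implicit.
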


\begin{proof}
	The result follows from the bijection $\Phi$ 
between vertex sets of $\R_n$ and $\Gamma_n$ (as it preserves the number of $1$s in a string), 
and the classical result that the 
number of Fibonacci strings of 
length $n$ and  Hamming weight $w$ is $\binom{n-w+1}{w}$~(see, for example \cite{saygi2019}).
\end{proof}

Figure~\ref{fig:fibonacci_run} depicts the first four Fibonacci graphs $\Gamma_n$ and the 
first four Fibonacci-run graphs $\R_n$. As noted, we omit the ending $00$ of the vertex labels in both. Note that the graphs $\Gamma_n$ and $\R_n$ are isomorphic for $n \in [4]$. 
However, the bijection $\Phi$ between the vertex sets of these graphs is not in general a graph isomorphism. 
For example, for the adjacent vertices $u= 101$ and $v=001$ in $\Gamma_3$, we have
$\Phi(u) = 110$, $ \Phi(v) = 001$ with 
$H(\Phi(u), \Phi(v))=3 $  in $\R_3$, so $\Phi(u)$ and $\Phi(v)$ 
are not adjacent in $\R_3$. However as it turns out the two graphs are isomorphic 
anyway (under a different graph isomorphism), see Figure~\ref{fig:fibonacci_run}.

\begin{figure}[ht]
	\begin{center}
		\includegraphics[width=0.7\textwidth]{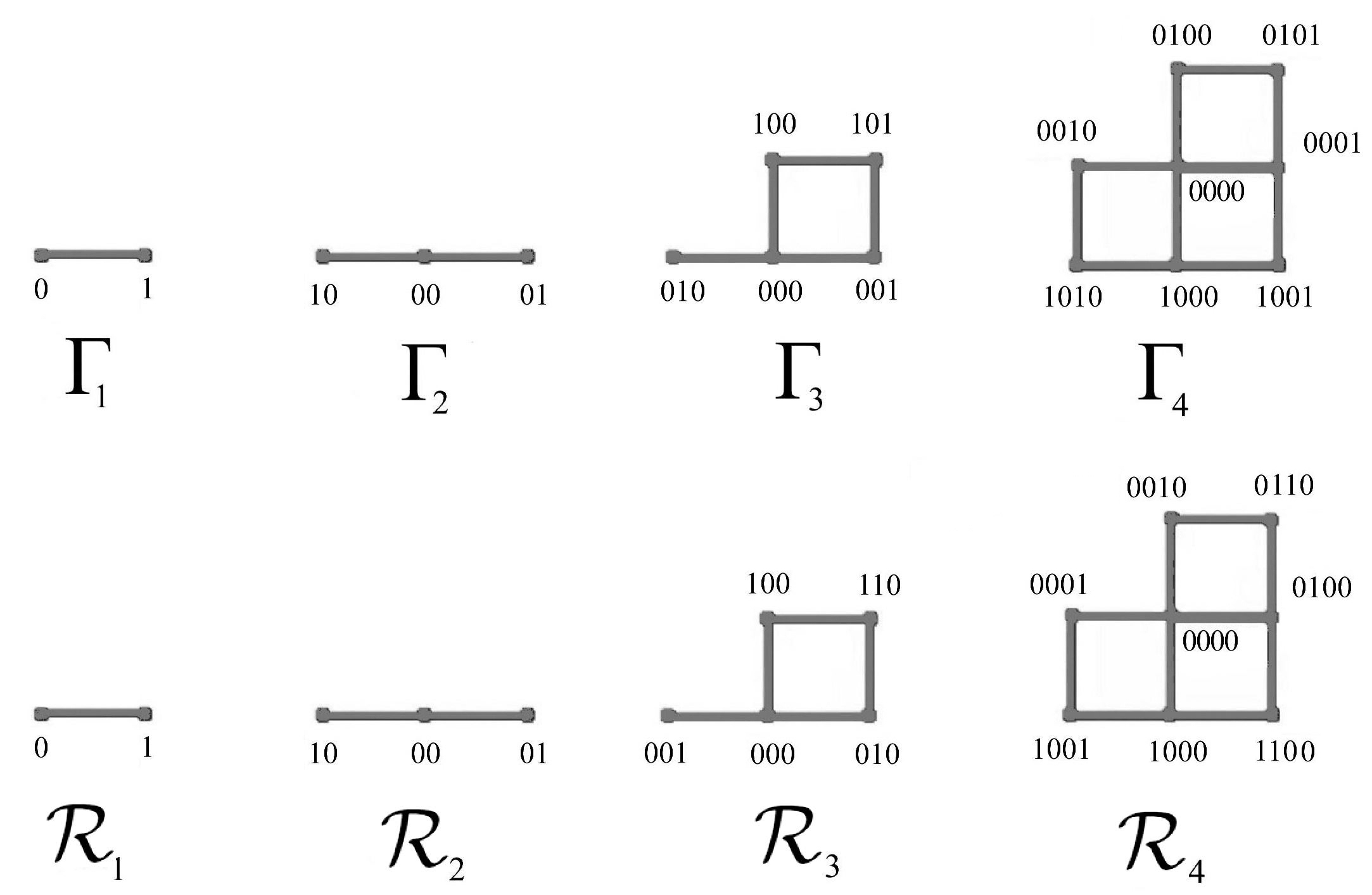}
	\end{center}
	\caption{Graphs $\Gamma_n$ and $\R_n$ for $n \in [4]$.}
	\label{fig:fibonacci_run}
\end{figure}

%*** TO DO: figures of R5, R6 etc. A table with n, nb of edges for Rn. ***

In the following, we prove that the graphs $\Gamma_n$ and $\R_n$ are never isomorphic for $n \geq 5$. 
Recall~\cite{klavzar2013-survey} that the number of edges of $\Gamma_n$ is given by
\begin{equation}\label{eq:esges-fib}
|E(\Gamma_n)| = \frac{1}{5} \left(2(n+1) f_n + n f_{n+1}\right),
\end{equation}
with generating function
\begin{equation}\label{eq:ogf-fib}
\sum_{n \geq 1} |E(\Gamma_n)| t^ n = \frac{t}{(1-t-t^2)^2}  = t + 2 t^2 + 5 t^3 + 10 t^4 + 20 t^5 + 38 t^6 + \cdots \end{equation}

\begin{lemma}
	\label{lem:edges}
	If $n \geq 5$, then 
\begin{equation}\label{inedges}
|E(\R_n)| = |E(\Gamma_n)| - |E(\Gamma_{n-4})|,
\end{equation}
	with generating function $$\sum_{n \geq 1} |E(\R_n)| t^ n = 
	\frac{t(1-t^4)}{(1-t-t^2)^2} = t + 2 t^2 + 5 t^3 + 10 t^4 + 19 t^5 + 36 t^6 + \cdots$$
\end{lemma}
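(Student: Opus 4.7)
The plan is to derive a clean two-term recurrence for $|E(\R_n)|$ from a careful graph decomposition, and then show that $G_n:=|E(\Gamma_n)|-|E(\Gamma_{n-4})|$ satisfies the same recurrence with the same initial values.

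First, I would use the free factorization of $v00$ into $R$-words to partition $V(\R_n)$ into \emph{part $0$}, the vertices $0v'$ with $v'\in V(\R_{n-1})$; \emph{part $k$} for $1\le k\le \lfloor(n-1)/2\rfloor$, the vertices $1^{k}0^{k+1}v''$ with $v''\in V(\R_{n-1-2k})$; and the single ``boundary'' vertex $b_n=1^{\lceil n/2\rceil}0^{\lfloor n/2\rfloor}$, which is the unique vertex whose associated word $b_n 00$ absorbs the trailing $00$ into the last $R$-factor. The subgraph on part $k$ is a relabeled copy of $\R_{n-1-2k}$, contributing $|E(\R_{n-1-2k})|$ interior edges.

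Next I would enumerate the cross-edges by a position-by-position comparison. Two vertices from parts $k_1<k_2$ both $\ge 1$ disagree at every position in $\{k_1+1,\dots,k_2\}$ (ones versus zeros), so Hamming distance $1$ forces $k_2=k_1+1$ and gives $|V(\R_{n-1-2(k_1+1)})|=f_{n-1-2k_1}$ edges per consecutive pair. The essential subtlety is that part~$0$ connects to \emph{every} part $k\ge 1$: a vertex $0v'$ of part~$0$ with $v'=1^{k-1}0^{k+1}v''$ is Hamming-adjacent to $1^{k}0^{k+1}v''$, yielding $|V(\R_{n-1-2k})|=f_{n+1-2k}$ edges for each $k\ge 1$. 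Finally, for $n\ge 3$ the boundary $b_n$ has exactly two neighbors, one in part~$0$ (flipping the leading bit) and one in the largest part (flipping bit $\lceil n/2\rceil$).

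Using $\sum_{k=1}^{m} f_{n+1-2k}=f_n-1$ and $\sum_{k=1}^{m-1} f_{n-1-2k}=f_{n-2}-1$ (the standard telescoping sums of Fibonacci numbers with step $2$), where $m=\lfloor(n-1)/2\rfloor$, all contributions add up to
\[
|E(\R_n)|=|E(\R_{n-1})|+\sum_{k=1}^{m}|E(\R_{n-1-2k})|+f_n+f_{n-2}, \qquad n\ge 3.
\]
Subtracting the analogue for $n-2$ telescopes the inner sum, leaving the two-term identity
\[
|E(\R_n)|=|E(\R_{n-1})|+|E(\R_{n-2})|+f_n-f_{n-4}, \qquad n\ge 5.
\]
From the Fibonacci cube identity $|E(\Gamma_n)|=|E(\Gamma_{n-1})|+|E(\Gamma_{n-2})|+f_n$ (a direct consequence of \eqref{eq:esges-fib} or of the standard decomposition $\Gamma_n=0\Gamma_{n-1}+10\Gamma_{n-2}$), it is immediate that $G_n$ satisfies the same recurrence. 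The base cases $|E(\R_5)|=19=20-1=G_5$ and $|E(\R_6)|=36=38-2=G_6$ are verified by direct computation, and the induction closes, proving \eqref{inedges}. The generating function identity then follows from \eqref{eq:ogf-fib}: multiplying by $(1-t^4)$ shifts in the subtracted sequence $|E(\Gamma_{n-4})|$, and agreement for $n\le 4$ is automatic since $\R_n\cong\Gamma_n$ there.

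The main obstacle is the cross-edge count: one must verify that part~$0$ shares edges with every part $k\ge 1$ (not merely with part~$1$), while simultaneously showing that parts $k_1,k_2$ with $k_1\ge 1$ and $k_2>k_1+1$ have no cross-edges. Getting both directions right---and correctly handling the boundary vertex's pair of neighbors, whose edges are not counted by the within-part or between-part tallies---is where the bookkeeping must be done carefully.
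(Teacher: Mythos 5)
Your proposal is correct and follows essentially the same route as the paper: the same partition of $V(\R_n)$ by the leading $R$-factor, the same cross-edge counts ($|V(\R_{n-1-2k})|$ between part $0$ and part $k$, consecutive parts only among $k\ge 1$, plus the two edges at the boundary vertex), and the same telescoping of the resulting recursion against its $(n-2)$ instance to get a two-term recurrence (your $f_n-f_{n-4}$ equals the paper's $f_{n-1}+f_{n-3}$). The only cosmetic difference is that you close the induction by showing $|E(\Gamma_n)|-|E(\Gamma_{n-4})|$ satisfies the same recurrence via the Fibonacci-cube edge recursion, where the paper instead reduces both sides to the explicit form $(3n+4)f_{n-6}+(5n+6)f_{n-5}$.
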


\begin{proof}
	For the proof of the expression~\eqref{inedges}, see Section~\ref{sec:decomposition},  Corollary~\ref{cor:edges-recursion}.
	
	The ordinary generating function for $|E(\R_n)|$ 
follows directly from the recursion, the properties of graphs $\R_1, \R_2, \R_3, R_4$, and~\eqref{eq:ogf-fib}:
	\begin{align*}
		\sum_{n \geq 1} |E(\R_n) | t^ n  = & ~ t + 2 t^2 + 5 t^3 + 10 t^4 + 
\sum_{n \geq 5} (|E(\Gamma_n)| - |E(\Gamma_{n-4})|) t^ n  \\
		 = & ~ t + 2 t^2 + 5 t^3 + 10 t^4 + \sum_{n \geq 5} |E(\Gamma_n) |t^ n - 
t^4 \sum_{n \geq 1} | E(\Gamma_n)|  t^ n  \\
		 = & ~ t + 2 t^2 + 5 t^3 + 10 t^4 
		   + \frac{t}{(1-t-t^2)^2} \\
& ~ - (t + 2 t^2 + 5 t^3 + 10 t^4) 
		  - t^4 \frac{t}{(1-t-t^2)^2} \\
		 = & ~ \frac{t-t^5}{(1-t-t^2)^2} ~.
	\end{align*}
\end{proof}

In particular, an analytic expression for the number of edges of $ \R_n$ in terms of the 
Fibonacci numbers is given in~\eqref{edge_formula} in Corollary~\ref{cor:edges-recursion}.

\begin{corollary}
	\label{cor:non-iso}
	If $n \geq 5$, then the graphs $\R_n$ and $\Gamma_n$ are not isomorphic.
\end{corollary}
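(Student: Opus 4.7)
The plan is very short, because this corollary is essentially a numerical consequence of Lemma~\ref{lem:edges}. The key observation is that any graph isomorphism preserves the number of edges, so it suffices to show that $|E(\R_n)| \neq |E(\Gamma_n)|$ whenever $n \geq 5$.

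First I would invoke the edge-count identity $|E(\R_n)| = |E(\Gamma_n)| - |E(\Gamma_{n-4})|$ from Lemma~\ref{lem:edges}, which is valid precisely for $n \geq 5$. Next I would note that in this range $n - 4 \geq 1$, so $\Gamma_{n-4}$ contains at least one edge; this is immediate either from the closed form in~\eqref{eq:esges-fib} or from the fact that the coefficients of the generating function in~\eqref{eq:ogf-fib} are strictly positive for $n \geq 1$. In particular $|E(\Gamma_{n-4})| \geq 1 > 0$, so
\[
|E(\R_n)| = |E(\Gamma_n)| - |E(\Gamma_{n-4})| < |E(\Gamma_n)|.
\]
Two graphs with different numbers of edges cannot be isomorphic, and the conclusion follows.

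There is no real obstacle here; the only thing that could conceivably go wrong is positivity of $|E(\Gamma_{n-4})|$, which is trivial. The substantive content is entirely contained in Lemma~\ref{lem:edges}, whose proof is deferred to the decomposition section; this corollary is simply the observation that the small correction term $|E(\Gamma_{n-4})|$ already suffices to distinguish $\R_n$ from $\Gamma_n$ as unlabeled graphs.
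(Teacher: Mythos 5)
Your proposal is correct and is essentially identical to the paper's own proof: both invoke the identity $|E(\R_n)| = |E(\Gamma_n)| - |E(\Gamma_{n-4})|$ from Lemma~\ref{lem:edges}, observe that $|E(\Gamma_{n-4})| \geq 1$ for $n \geq 5$, and conclude that the edge counts differ. No gaps.
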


\begin{proof}
	If $n \geq 5$, then $|E(\Gamma_{n-4})| \geq 1$, and therefore from~\eqref{inedges} 
in Lemma~\ref{lem:edges} it follows that $ | E(\R_n)|  <|  E(\Gamma_n)| $, thus the 
graphs cannot be isomorphic.
\end{proof}

Clearly, the Fibonacci-run graphs are bipartite. The parts of the bipartition are simply obtained 
by separating the vertices of odd and even Hamming weight. Thus we have the following easy inequality 
for the vertex independence number $\alpha(\R_n)$.

\begin{lemma}
	\label{lem:independence}
	%If $n \geq 1$, then $\alpha(\R_n) \geq \left \lceil \frac{|V(\R_n)|}{2} \right \rceil$.
	If $n \geq 1$, then $\alpha(\R_n) \geq \left \lceil \frac{f_{n+2}}{2} \right \rceil$.
\end{lemma}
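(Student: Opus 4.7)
The plan is to exploit the bipartition hinted at just before the statement. Since $\R_n$ is an induced subgraph of $Q_{n+2}$, any edge of $\R_n$ connects two binary strings that differ in exactly one coordinate; consequently the two endpoints have Hamming weights differing by $1$, and hence of opposite parity.

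From this I would partition
\[
V(\R_n) = A \cup B,
\]
where $A$ consists of those vertices of even Hamming weight and $B$ of those of odd Hamming weight. By the parity observation above, no edge of $\R_n$ has both endpoints in $A$ or both in $B$, so each of $A$ and $B$ is an independent set of $\R_n$. This makes $\R_n$ bipartite with parts $A,B$ and gives
\[
\alpha(\R_n) \geq \max\{|A|,|B|\} \geq \left\lceil \frac{|A|+|B|}{2} \right\rceil = \left\lceil \frac{f_{n+2}}{2} \right\rceil,
\]
using Lemma~\ref{lem:vertices-bij} for the last equality.

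There is no real obstacle here; the only thing worth double-checking is that the parity-based bipartition is actually nontrivial for all $n\geq 1$ (so that the pigeonhole bound is stated correctly), but this is immediate because $\R_n$ has at least one edge for $n\geq 1$, and in any case the displayed pigeonhole inequality is valid regardless of whether one part happens to be empty. If a sharper bound were desired one could instead take $A$ to be exactly the larger class and invoke Corollary~\ref{cor:vertices-weight} to sum $\binom{n-w+1}{w}$ over $w$ of the appropriate parity, but that refinement is not needed for the stated inequality.
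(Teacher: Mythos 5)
Your proof is correct and matches the paper's own argument: the paper likewise obtains the bound from the bipartition of $V(\R_n)$ into vertices of even and odd Hamming weight, combined with $|V(\R_n)| = f_{n+2}$ from Lemma~\ref{lem:vertices-bij}. Nothing further is needed.
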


%%%%%%%%%%%%%%%%%%%%%%%%%%%%%%%%%%%%%%%%%%
%%%%%%%%%%%%%%%%%%%%%%%%%%%%%%%%%%%%%%%%%%
\section{Decomposition of $\R_n$}
\label{sec:decomposition}
%%%%%%%%%%%%%%%%%%%%%%%%%%%%%%%%%%%%%%%%%%
%%%%%%%%%%%%%%%%%%%%%%%%%%%%%%%%%%%%%%%%%%

It is well known that Fibonacci cubes have a simple, and very useful, decomposition~\cite{hsu1993}. 
Namely, $\Gamma_n$ can be partitioned into subgraphs $0 \Gamma_{n-1}$ and $10\Gamma_{n-2}$, 
with a perfect matching between $ \Gamma_{n-2}$ and its copy $ 0 \Gamma_{n-2}$ in $ \Gamma_{n-1}$.
This immediately implies a formula for $|V(\Gamma_n)|$, $|E(\Gamma_n)|$, 
existence of a Hamiltonian path in $\Gamma_n$, and other 
interesting properties~\cite{hsu1993, klavzar2013-survey}. 
With this in mind, we aim to find a similar decomposition of the Fibonacci-run graphs. 

\begin{lemma}
	\label{lem:part-vertices}
	The vertex set of a Fibonacci-run graph $\R_n$ can be partitioned into $$ \bigcup_{k = 0}^{\lceil n/2 \rceil - 1} 1^k 0^{k+1} V(\R_{n - (2k+1)}) \cup 1^{\lceil n/2 \rceil} 0^{\lfloor n/2 \rfloor} V(\R_0)~.$$
\end{lemma}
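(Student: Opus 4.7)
The plan is to classify each vertex $w \in V(\R_n)$ according to the length $k$ of its initial maximal run of $1$s, with the convention $k = 0$ when $w$ begins with a $0$ or is empty. I will show that this value of $k$ places $w$ in exactly one piece of the claimed partition, and conversely that every string in those pieces really is a vertex of $\R_n$.

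First, since $w00$ is run-constrained, the initial block $1^k$ in $w$ must be followed in $w00$ by a run of $0$s of length strictly greater than $k$. Comparing the length $k + (k+1)$ of this forced prefix to the total length $n+2$ of $w00$ gives $2k \le n+1$, so $0 \le k \le \lceil n/2 \rceil$. For $0 \le k \le \lceil n/2 \rceil - 1$ I claim $w = 1^k 0^{k+1} v$ with $v \in V(\R_{n-2k-1})$. When $k = 0$ this is just the observation that a leading $0$ may be stripped without affecting the run-constrained property of the rest followed by $00$. When $k \ge 1$, if $w$ contains a further $1$ beyond the leading block, then the intermediate run of $0$s has length at least $k+1$, so one can peel off $1^k 0^{k+1}$ and leave a residue $v$ of length $n - 2k - 1$; if instead $w = 1^k 0^{n-k}$, the bound $k \le (n-1)/2$ ensures $n - k \ge k+1$, so the same peeling works with $v$ a (possibly empty) string of $0$s. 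In either case $v 00$ inherits the run-constrained property because prepending $0$s to a run-constrained string preserves it, so $v \in V(\R_{n-2k-1})$.

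The boundary case $k = \lceil n/2 \rceil$ is the only one that is not routine, and this is the step I expect to be the main obstacle. Here the length of $w$ after the initial $1^k$ is only $\lfloor n/2 \rfloor$, which is too short to fit another $1$ together with the strictly-longer $0$-run it would need. So $w$ is forced to equal $1^{\lceil n/2 \rceil} 0^{\lfloor n/2 \rfloor}$, and one must check separately that this single string does lie in $V(\R_n)$: in $w00$ the zero-run has length $\lfloor n/2 \rfloor + 2 > \lceil n/2 \rceil$. This single string exactly fills the term $1^{\lceil n/2 \rceil} 0^{\lfloor n/2 \rfloor} V(\R_0)$.

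The converse direction, that every string of the listed form really is a vertex of $\R_n$, follows from essentially the same run-length bookkeeping, and disjointness of the pieces is automatic since they are indexed by distinct initial $1$-run lengths. Once the classification by initial $1$-run is set up and the boundary string is handled by hand, the argument reduces to routine verification.
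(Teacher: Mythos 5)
Your proof is correct and follows essentially the same route as the paper: classify each vertex by the length $k$ of its initial run of $1$s, note that the run-length constraint forces the prefix $1^k0^{k+1}$ and bounds $k$ by $\lceil n/2 \rceil$, and peel that prefix off to land in $V(\R_{n-2k-1})$, with the extremal $k=\lceil n/2\rceil$ contributing the single vertex $1^{\lceil n/2\rceil}0^{\lfloor n/2\rfloor}$. You supply somewhat more detail than the paper does (the converse inclusion, disjointness, and the check that the residue is again run-constrained), but the underlying decomposition is identical.
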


\begin{proof}
	The vertices of $\R_n$ are run-constrained strings of length $n+2$, all ending with $00$. Each of them starts either with a $0$ or with a run of $1$s (followed by a longer run of $0$s). The maximal number of $1$s in such a string is $\lceil n/2 \rceil$. Thus every string in $V(\R_n)$ can be uniquely written as $1^k 0^{k+1} w$, where $0 \leq k \leq \lceil n/2 \rceil$, and $w$ is a run-constrained string of length $n+1-2k$. If $k < \lceil n/2 \rceil$, then $w \in V(R_{n-(2k+1)})$, while the last sting can be viewed as $1^{\lceil n/2 \rceil} 0^{\lfloor n/2 \rfloor} V(\R_0)$. This yields the 
described decomposition.
\end{proof}

It follows from Lemma~\ref{lem:part-vertices} that $\R_n$ contains the 
following graphs as subgraphs 
$$ 
0 \R_{n-1}, 100 \R_{n-3}, 11000 \R_{n-5}, 1110000 \R_{n-7} , \ldots 
$$ 
The last graph in this sequence is $1^{m-1} 0^{m} \R_1$ if $n = 2m$, 
and $1^{m-1} 0^{m} \R_0$ if $n = 2m-1$. We augment this list by one more subgraph 
of $\R_n$ consisting of a single vertex: $1^m 0^{m} \R_0$ if $n = 2m$, and $1^{m} 0^{m-1} \R_0$ 
if $n = 2m-1$. 

If we denote this partition into subgraphs with $+$, then we obtain the following for $n \in [8]$:
\begin{align*}
	\R_1 &= 0 \R_0 + 1 \R_0\\
	\R_2 &= 0 \R_1 + 10\R_0\\
	\R_3 &= 0\R_2 +  100\R_0  + 110\R_0\\
	\R_4 &= 0\R_3 +  100\R_1 + 1100 \R_0\\
	\R_5 &= 0\R_4 +  100\R_2 + 11000\R_0 + 11100 \R_0\\
	\R_6 &= 0\R_5 +  100\R_3 + 11000\R_1 +111000 \R_0 \\
	\R_7 &= 0\R_6 + 100\R_4 +11000\R_2 + 1110000\R_0 + 1111000 \R_0\\
	\R_8 &= 0\R_7 + 100\R_5 +11000\R_3 + 1110000\R_1 +11110000\R_0
\end{align*}

A schematic representation of the decomposition of graphs $\R_1, \ldots, \R_5$ is shown 
in Figure~\ref{fig:decomposition-2}.

\begin{figure}[!ht]
\ignore
{
		\begin{tikzpicture}[thick]
		
		% Define style for nodes
		\tikzstyle{every node}=[circle, draw, fill=black!10,
		inner sep=0pt, minimum width=5pt]
		
		\begin{scope}[xshift = 1cm]
		\node[label=below: {$0$}, fill=black] (u0) at (0,0) {};
		\node[label=below: {$1$}, fill=black] (u1) at (1,0) {};
		\draw[line width=3pt] (u0) -- (u1);
		\path (u0) edge (u1);
		\end{scope}
		
		\begin{scope}[xshift = 4cm]
		\node[label=below: {$10$}, fill=black] (u10) at (0,0) {};
		\node[label=below: {$00$}, fill=black] (u00) at (1,0) {};
		\node[label=below: {$01$}, fill=black] (u01) at (2,0) {};
		\draw (u10) -- (u00);
		\draw[line width=3pt] (u00) -- (u01);
		\end{scope}
		
		\begin{scope}[xshift = 8cm]
		\node[label=below: {$001$}, fill=black] (u001) at (0,0) {};
		\node[label=below: {$000$}, fill=black] (u000) at (1,0) {};
		\node[label=below: {$010$}, fill=black] (u010) at (2,0) {};
		\node[label=left: {$100$}, fill=black] (u100) at (1,1) {};
		\node[label=right: {$110$}, fill=black] (u110) at (2,1) {};
		\draw[line width=3pt] (u001) -- (u000);
		\draw[line width=3pt] (u010) -- (u000);
		\draw (u100) -- (u000);
		\draw (u100) -- (u110);
		\draw (u110) -- (u010);
		\end{scope}
		
		\begin{scope}[yshift = -7cm]
		\node[label=left: {$1001$}, fill=black] (u1001) at (0,0) {};
		\node[label=below: {$1000$}, fill=black] (u1000) at (1,0) {};
		\node[label=right: {$1100$}, fill=black] (u1100) at (2,0) {};
		\node[label=left: {$0001$}, fill=black] (u0001) at (0,1) {};
		\node[label=-3: {$0000$}, fill=black] (u0000) at (1,1) {};
		\node[label=right: {$0100$}, fill=black] (u0100) at (2,1) {};
		\node[label=left: {$0010$}, fill=black] (u0010) at (1,2) {};
		\node[label=right: {$0110$}, fill=black] (u0110) at (2,2) {};
		\draw[line width=3pt] (u0001) -- (u0000) -- (u0100) -- (u0110) -- (u0010) -- (u0000);
		\draw[line width=3pt] (u1001) -- (u1000);
		\draw (u1001) -- (u0001);
		\draw (u1000) -- (u0000);
		\draw (u1000) -- (u1100) -- (u0100);
		\end{scope}
		
		\begin{scope}[xshift = 6cm,yshift = -7cm]
		\node[label=left: {$00110$}, fill=black] (u00110) at (0,0) {};
		\node[label=below: {$00100$}, fill=black] (u00100) at (2,0) {};
		\node[label=below: {$01100$}, fill=black] (u01100) at (4,0) {};
		\node[label=left: {$00010$}, fill=black] (u00010) at (0,2) {};
		\node[label=-170: {$00000$}, fill=black] (u00000) at (2,2) {};
		\node[label=-170: {$01000$}, fill=black] (u01000) at (4,2) {};
		\node[label=left: {$00001$}, fill=black] (u00001) at (2,4) {};
		\node[label=right: {$01001$}, fill=black] (u01001) at (4,4) {};
		\node[label=left: {$10010$}, fill=black] (u10010) at (1,3) {};
		\node[label=-3: {$10000$}, fill=black] (u10000) at (3,3) {};
		\node[label=right: {$11000$}, fill=black] (u11000) at (5,3) {};
		\node[label=right: {$11100$}, fill=black] (u11100) at (5,1) {};
		\node[label=right: {$10001$}, fill=black] (u10001) at (3,5) {};
		
		\draw[line width=3pt] (u00110) -- (u00100) -- (u01100) -- (u01000) -- (u00000) -- (u00010) -- (u00110);
		\draw[line width=3pt] (u00100) -- (u00000) -- (u00001) -- (u01001) -- (u01000) -- (u01100);
		
		\draw[line width=3pt] (u10010) -- (u10000) -- (u10001);
		
		\draw (u00010) -- (u10010);
		\draw (u00000) -- (u10000);
		\draw (u00001) -- (u10001);
		
		\draw (u01000) -- (u11000);
		\draw (u10000) -- (u11000) -- (u11100) -- (u01100);		
		\end{scope}
		
		\end{tikzpicture}

%end ignore
}
	\begin{center}
		\includegraphics[width=0.8\textwidth]{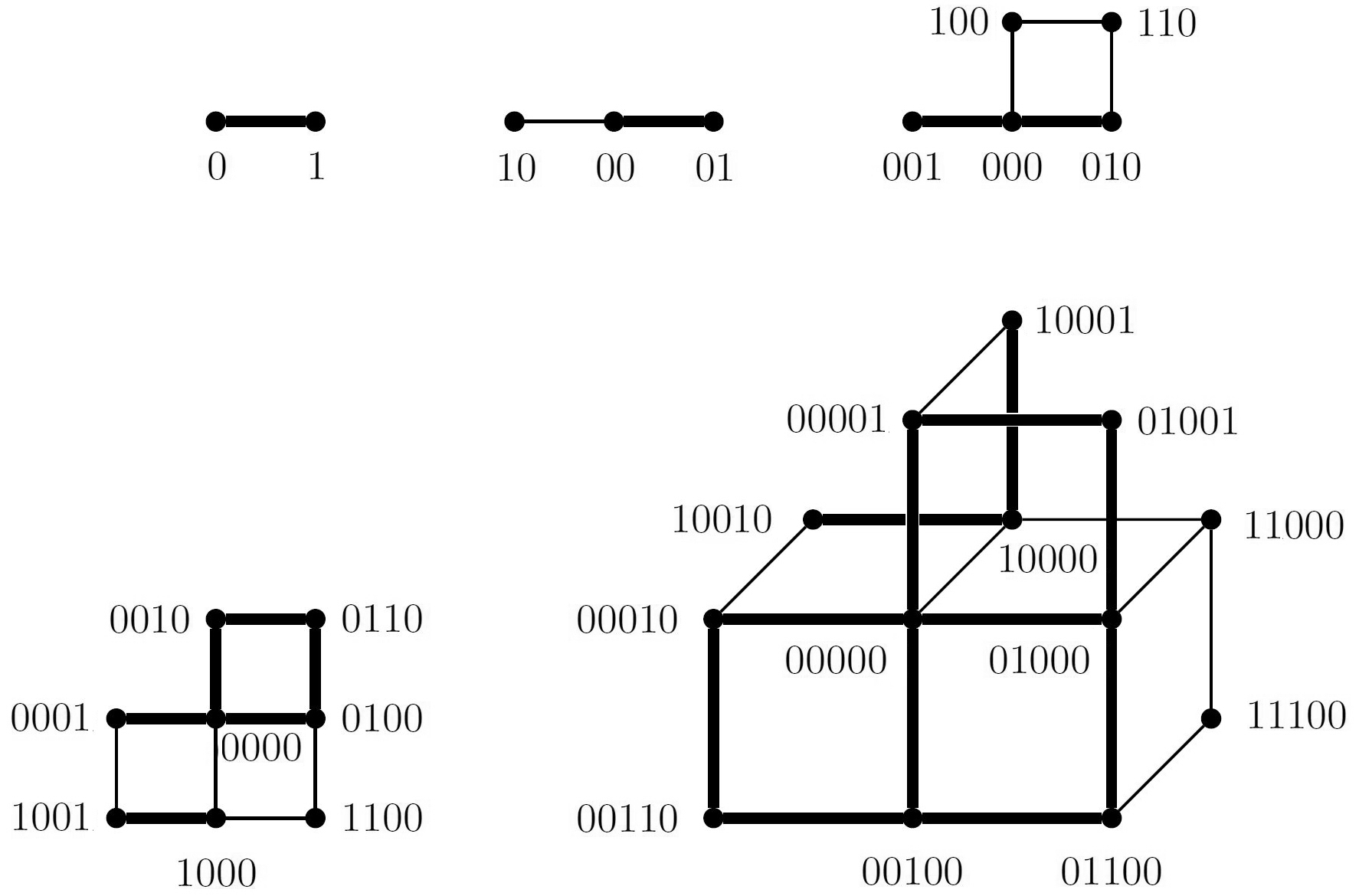}
	\end{center}
		\caption{The graphs $\R_n$, for $n \in [5]$, with their decomposition schematically shown with thicker edges.}
		\label{fig:decomposition-2}
\end{figure}

Next, we consider the edges between the mentioned subgraphs of $\R_n$. Inside 
each subgraph $1^k 0^{k+1} \R_{n - (2k+1)}$ we inherit the edges from the 
graph $\R_{n - (2k+1)}$. Between subgraphs $0 \R_{n-1}$ 
and $1^k 0^{k+1} \R_{n - (2k+1)}$ for $1 \leq k \leq \lceil n/2 \rceil -1$, we 
have an edge if and only if the two strings differ in exactly one coordinate. 
This means that the edges are of the form $0 1^{k-1} 0^{k+1} w \sim 1^k 0^{k+1} w$, where 
$w \in V(\R_{n - (2k+1)})$. 
Therefore each pair of subgraphs $0 \R_{n-1}$ and $1^k 0^{k+1} \R_{n - (2k+1)}$ 
for $1 \leq k \leq \lceil n/2 \rceil -1$ yields exactly $|V(\R_{n - (2k+1)})|$ edges 
in $\R_n$. On the other hand, between subgraphs $1^k 0^{k+1} \R_{n - (2k+1)}$ and 
$1^{\ell} 0^{\ell+1} \R_{n - (2\ell+1)}$ for $k < \ell$ 
and $1 \leq k,\ell \leq \lceil n/2 \rceil -1$, we can have an edge only if 
$\ell = k + 1$. To be precise, the edges between those two subgraphs are of the form 
$1^{k+1} 0^{k+2} w \sim 1^k 0^{k+3} w$, where $w \in V(\R_{n-(2k+3)})$, so we 
get exactly $|V(\R_{n-(2k+3)})|$ edges between appropriate subgraphs. The only remaining edges to 
study are between the vertex $1^{\lceil n/2 \rceil} 0^{\lfloor n/2 \rfloor} V(\R_0)$ 
and the other parts of the graph. But this vertex has exactly two neighbors 
in $\R_n$: $1^{\lceil n/2 \rceil -1} 0^{\lfloor n/2 \rfloor +1} V(\R_0)$ 
and $0 1^{\lceil n/2 \rceil -1} 0^{\lfloor n/2 \rfloor} V(\R_0)$. 

As examples,
see Figure~\ref{fig:decomposition} for a schematic representation of the decomposition 
of $\R_6$ and $\R_7$.

\begin{figure}[ht]
	\begin{center}
		\includegraphics[width=0.95\textwidth]{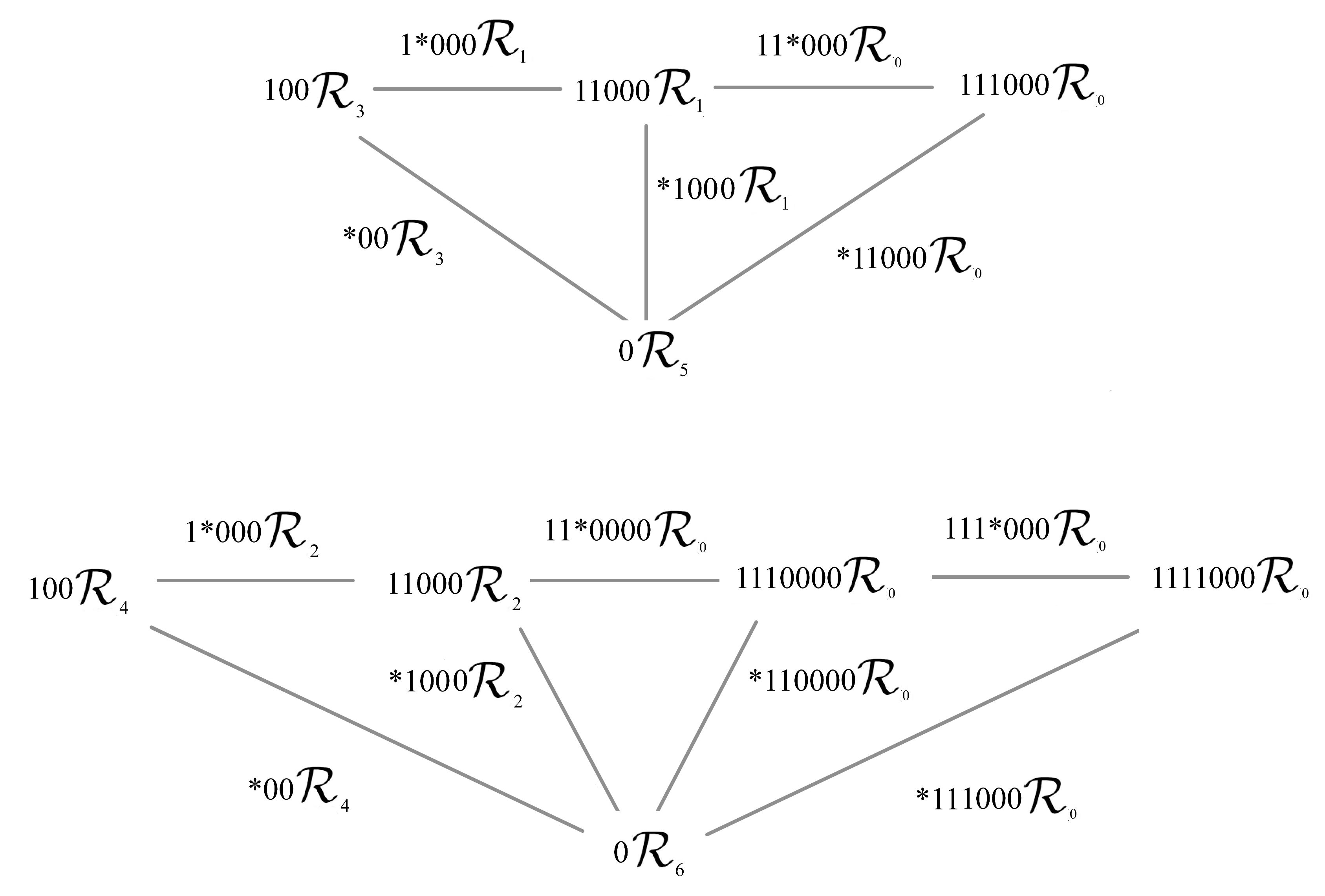}
	\end{center}
	\caption{The decomposition of $ \R_6$ (above) and $\R_7$ (below). The edges are symbolically marked with the lines indicating the edges present between the parts.}
	\label{fig:decomposition}
\end{figure}

We already know (see Figure~\ref{fig:fibonacci_run}), that $|E(\R_1)| = 1$, $|E(\R_2)| = 2$, 
$|E(\R_3)| = 5$, and $|E(\R_4)| = 10$. 
For $n \geq 5$, we can use the above argument to obtain the following 
\begin{equation}
	\label{eq:edges-decomposition}
	|E(\R_n)| = \sum_{k=0}^{\lceil n/2 \rceil -1} |E(\R_{n - (2k+1)})| 
+ |V(\R_{n-3})| + 2 \sum_{k = 2}^{\lceil n/2 \rceil -1} |V(\R_{n - (2k+1)})| + 2.
\end{equation}

\begin{lemma}
	\label{lem:edges-decomposition}
	If $n \geq 4$, then 
	$$|E(\R_n)| = |E(\R_{n-1})| + |E(\R_{n-2})| + f_{n-1} + f_{n-3}~.$$
\end{lemma}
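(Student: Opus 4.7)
The plan is to apply the edge-count identity~\eqref{eq:edges-decomposition} to both $\R_n$ and $\R_{n-2}$ and subtract, exploiting the fact that $\lceil (n-2)/2 \rceil = \lceil n/2 \rceil - 1$ so that almost everything telescopes.

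Concretely, I would write the right-hand side of~\eqref{eq:edges-decomposition} as $A_n + S_n + 2$, where $A_n = \sum_{k=0}^{\lceil n/2 \rceil-1} |E(\R_{n-2k-1})|$ is the $|E|$-part and $S_n = |V(\R_{n-3})| + 2\sum_{k=2}^{\lceil n/2 \rceil-1} |V(\R_{n-2k-1})|$ is the $|V|$-part. Reindexing $k \mapsto k-1$, the tail of $A_n$ starting from $k=1$ matches $A_{n-2}$ term for term, so $A_n - A_{n-2} = |E(\R_{n-1})|$. Expanding $S_n$ and $S_{n-2}$ in the form $|V(\R_{n-3})|, 2|V(\R_{n-5})|, 2|V(\R_{n-7})|, \ldots$ and $|V(\R_{n-5})|, 2|V(\R_{n-7})|, \ldots$ respectively, the tails beginning with $2|V(\R_{n-7})|$ are identical, and the remaining low-index terms combine to give $S_n - S_{n-2} = |V(\R_{n-3})| + |V(\R_{n-5})|$ (the $k=2$ term of $S_n$ contributing $2|V(\R_{n-5})|$ against the bare $|V(\R_{n-5})|$ of $S_{n-2}$). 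The constants $+2$ cancel. Substituting $|V(\R_{n-3})| = f_{n-1}$ and $|V(\R_{n-5})| = f_{n-3}$ from Lemma~\ref{lem:vertices-bij} yields the claimed identity.

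The main obstacle, such as it is, concerns the range of validity: since~\eqref{eq:edges-decomposition} is proved only for $n \geq 5$, the subtraction argument directly establishes the recursion only for $n \geq 7$. The three small cases $n \in \{4, 5, 6\}$ must be verified by hand using the tabulated values $|E(\R_1)|, \ldots, |E(\R_6)| = 1, 2, 5, 10, 19, 36$ (from Figure~\ref{fig:fibonacci_run} and Lemma~\ref{lem:edges}), simply checking that $10 = 5 + 2 + 2 + 1$, $19 = 10 + 5 + 3 + 1$, and $36 = 19 + 10 + 5 + 2$. Beyond this small bookkeeping, the only minor subtlety is to confirm that the passage $\lceil n/2 \rceil \mapsto \lceil n/2 \rceil - 1$ is uniform in the parity of $n$ and that the upper summation indices in $A_{n-2}$ and $S_{n-2}$ truly coincide with those obtained by reindexing, which is immediate.
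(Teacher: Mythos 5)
Your proposal is correct and follows essentially the same route as the paper: apply the decomposition identity~\eqref{eq:edges-decomposition} to $\R_n$ and $\R_{n-2}$, subtract so that the sums telescope to $|E(\R_{n-1})| + |V(\R_{n-3})| + |V(\R_{n-5})| = |E(\R_{n-1})| + f_{n-1} + f_{n-3}$, and check $n \in \{4,5,6\}$ by hand. The index bookkeeping and the small-case verifications in your write-up are all accurate.
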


\begin{proof}
	We know from Lemma~\ref{lem:vertices-bij} that $|V(\R_n)| = f_{n+2}$. 
Set $e_n = |E(\R_n)|$. If $n \geq 7$, we can use the recursion 
from~\eqref{eq:edges-decomposition} for $n$ and $n-2$:
\begin{eqnarray*}
e_n & = & e_{n-1} + \sum_{k=1}^{\lceil n/2 \rceil -1} 
e_{n - (2k+1)} + |V(\R_{n-3})| + 2 |V(\R_{n-5})| \\
& & \hspace*{2.5cm} + ~2  \sum_{k = 3}^{\lceil n/2 \rceil -1} |V(\R_{n-(2k+1)})| + 2\\
	& = &e_{n-1} + f_{n-1} + f_{n-3} + e_{n-2}~.
\end{eqnarray*}
	For $n \in \{4,5,6\}$, we can determine the number of edges $e_n$ 
from~\eqref{eq:edges-decomposition}, and check by hand that the recursion 
$e_n = e_{n-1} + e_{n-2} + f_{n-1} + f_{n-3}$ gives the same result.	
\end{proof}

\begin{corollary}
	\label{cor:edges-recursion}
	If $n \geq 5$, then $$|E(\R_n)| = |E(\Gamma_n)| - |E(\Gamma_{n-4})|~.$$
\end{corollary}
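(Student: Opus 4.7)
The plan is to introduce the auxiliary sequence $g_n := |E(\Gamma_n)| - |E(\Gamma_{n-4})|$ and to show that it satisfies the very recursion that governs $e_n := |E(\R_n)|$ by Lemma~\ref{lem:edges-decomposition}, together with the same values at two consecutive indices. A short induction on $n$ then identifies the two sequences.

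The key ingredient is the classical edge recursion for Fibonacci cubes,
$$|E(\Gamma_n)| = |E(\Gamma_{n-1})| + |E(\Gamma_{n-2})| + f_n \quad (n \geq 2),$$
an immediate consequence of the fundamental decomposition $\Gamma_n = 0\Gamma_{n-1} + 10\Gamma_{n-2}$: each part contributes its own edges, while a perfect matching of size $|V(\Gamma_{n-2})| = f_n$ links the copy of $\Gamma_{n-2}$ sitting inside $0\Gamma_{n-1}$ to $10\Gamma_{n-2}$. Subtracting the same identity written at index $n - 4$ gives
$$g_n - g_{n-1} - g_{n-2} = f_n - f_{n-4},$$
valid for $n \geq 6$. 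Two applications of the Fibonacci recursion yield $f_n = f_{n-1} + f_{n-2} = f_{n-1} + f_{n-3} + f_{n-4}$, so the right-hand side collapses to $f_{n-1} + f_{n-3}$. This is exactly the recursion satisfied by $e_n$ in Lemma~\ref{lem:edges-decomposition}.

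For the base, I will use Lemma~\ref{lem:edges-decomposition} together with the already-recorded values $|E(\R_3)| = 5$ and $|E(\R_4)| = 10$ to get $e_5 = 10 + 5 + f_4 + f_2 = 19$ and $e_6 = 19 + 10 + f_5 + f_3 = 36$. Formula~\eqref{eq:esges-fib} independently yields $|E(\Gamma_5)| = 20$ and $|E(\Gamma_6)| = 38$, hence $g_5 = 20 - 1 = 19$ and $g_6 = 38 - 2 = 36$. Thus $e_5 = g_5$ and $e_6 = g_6$, and the common recursion for $n \geq 7$ then propagates the equality to all $n \geq 5$.

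No step should pose a real obstacle; the one place requiring a bit of care is the Fibonacci-index bookkeeping in the collapse $f_n - f_{n-4} = f_{n-1} + f_{n-3}$, and making sure that both base cases $n = 5, 6$ are checked before invoking the shared recursion (which for $g_n$ itself only becomes available starting at $n = 6$, so a single base value would not suffice).
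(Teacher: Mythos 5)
Your proof is correct. It has the same inductive skeleton as the paper's --- both arguments rest on the recursion $e_n = e_{n-1} + e_{n-2} + f_{n-1} + f_{n-3}$ of Lemma~\ref{lem:edges-decomposition} plus base cases --- but the inductive step is verified by a genuinely different computation. The paper substitutes the induction hypothesis into that recursion and then grinds through the closed formula~\eqref{eq:esges-fib} to show that both $|E(\R_n)|$ and $|E(\Gamma_n)| - |E(\Gamma_{n-4})|$ equal $(3n+4)f_{n-6} + (5n+6)f_{n-5}$. You instead show that $g_n := |E(\Gamma_n)| - |E(\Gamma_{n-4})|$ satisfies the \emph{same} inhomogeneous recursion as $e_n$, by subtracting two instances of the Fibonacci-cube edge recursion $|E(\Gamma_n)| = |E(\Gamma_{n-1})| + |E(\Gamma_{n-2})| + f_n$ (a standard consequence of the fundamental decomposition, implicit but never displayed in the paper) and collapsing $f_n - f_{n-4} = f_{n-1} + f_{n-3}$. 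This buys a cleaner step: no manipulation of the closed form is needed, only the matching initial values $e_5 = g_5 = 19$ and $e_6 = g_6 = 36$, and you are right that two base cases are required because the $g$-recursion only becomes available at $n=6$. The trade-off is that you must import or rederive the $\Gamma_n$ edge recursion, whereas the paper uses only formulas it has already stated; in exchange, your route avoids the explicit expression $(3n+4)f_{n-6}+(5n+6)f_{n-5}$ entirely (which the paper also wants for its own sake, as equation~\eqref{edge_formula}). All of your numerical checks are correct.
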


\begin{proof}
	We can easily check that the recursion holds for $n = 5$. Now 
let $n \geq 6$, and let again $e_n = |E(\R_n)|$. By Lemma~\ref{lem:edges-decomposition}, the 
induction hypothesis, the equation~\eqref{eq:esges-fib}, 
and the relation $f_n = f_{n-1} + f_{n-2}$, it follows that 
	
\begin{eqnarray}\nonumber
e_n &= &(e_{n-1} - e_{n-5}) + (e_{n-2} - e_{n-6}) + f_{n-1} + f_{n-3} \\ \label{edge_formula}
&  = & (3 n + 4 ) f_{n-6} + (5 n + 6 ) f_ {n-5}~. 
\end{eqnarray}
	On the other hand, we can also calculate that 
	$$|E(\Gamma_n)| - |E(\Gamma_{n-4})| = (3 n + 4) f_{n-6} + (5 n  + 6) f_ {n-5} ~,$$
	which completes the proof.
\end{proof}

To conclude the section, we observe that from the decomposition it 
follows that each vertex in $\R_n \setminus 0 \R_{n-1}$ has exactly one 
unique neighbor in $0 \R_{n-1}$. More precisely, let $\varphi \colon \R_n \setminus 0 \R_{n-1} \to 0 \R_{n-1}$ be defined as 
\begin{align*}
1^k 0^{k+1} w & \mapsto 0 1^{k-1} 0^{k+1} w, \; w \in \R_{n-2k-1}, \; 1 \leq k \leq \lceil n/2 \rceil - 1,\\
1^{\lceil n/2 \rceil} 0^{\lfloor n/2 \rfloor} & \mapsto 0 1^{\lceil n/2 \rceil -1} 0^{\lfloor n/2 \rfloor}.
\end{align*}
Clearly, the function $\varphi$ is injective. Moreover, for every 
$u, v \in \R_n \setminus 0$ it satisfies $ u \sim v \iff \varphi(u) \sim \varphi(v)$.

%*** Maybe this could be used when proving results on Hamiltonicity or independence.***

%%%%%%%%%%%%%%%%%%%%%%%%%%%%%%%%%%%%%%%%%%
%%%%%%%%%%%%%%%%%%%%%%%%%%%%%%%%%%%%%%%%%%
\section{Diameter}
\label{sec:diam}
%%%%%%%%%%%%%%%%%%%%%%%%%%%%%%%%%%%%%%%%%%
%%%%%%%%%%%%%%%%%%%%%%%%%%%%%%%%%%%%%%%%%%

The diameter of Fibonacci cubes is well known~\cite{hsu1993}, and equals 
$\diam(\Gamma_n)=n$. 
Determining the diameter of Fibonacci-run graphs turns out to be a rather difficult task. 
Clearly, $\diam(\R_n) = n$ for $n \in [4]$. 
The exact values of the diameter computed by brute force are 
shown in Figure~\ref{fig:diameter_plot} for $ n \leq 30$.
%Using the computer, we were also able to obtain the following: $\diam(\R_n) = n-1$ for 
%$5 \leq n \leq 13$, and $\diam(\R_n) = n-2$ for $14 \leq n \leq 22$. 
For general $n$ we present a lower bound on $\diam(\R_n)$, 
and a conjecture on the actual 
value of the diameter.

We first prove the following lemma, which will be generalized later.

\begin{lemma}
	\label{lem:diam}
	If $ n = \frac{1}{2} (r^2 +3r -2)$, then $\diam(\R_n) \geq 
	n- \left\lfloor \frac{r-1}{2} \right\rfloor$.
\end{lemma}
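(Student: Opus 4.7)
The plan is to prove the lower bound constructively by exhibiting two explicit vertices $u, v \in V(\R_n)$ with sufficiently large Hamming distance. Since $\R_n$ is an induced subgraph of the hypercube $Q_n$ (after suppressing the trailing $00$), one always has $d_{\R_n}(u,v) \geq H(u,v)$, so any choice of $u,v$ with $H(u,v) \geq n - \lfloor (r-1)/2 \rfloor$ will give the bound.

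For the construction, I would exploit the fact that $n + 2 = \binom{r+2}{2}$ is a triangular number, and assemble $u$ and $v$ from letters of the alphabet $R = \{0, 100, 11000, 1110000, \ldots\}$. A natural choice is to place the large letter $R_r = 1^r 0^{r+1}$ at one end of $u$, padded by shorter $R$-letters to reach total length $n + 2$, and to take $v$ to be a \emph{shifted complement} in which the long $1^r$ run sits in a zero region of $u$ (and vice versa). The model cases $r = 2$ and $r = 3$ yield the pairs $(u, v) = (1001, 0110)$ and $(u, v) = (11100001, 00011100)$, respectively; one verifies directly that both strings are run-constrained and that their Hamming distances are $4$ and $7$, matching the desired bounds.

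The proof would then reduce to two verifications: (i) that the constructed $u$ and $v$ satisfy the run-constraint on every factor of their $R$-decomposition, which is immediate from the factorization itself, and (ii) that the number of coordinates on which $u$ and $v$ agree is exactly $\lfloor (r-1)/2 \rfloor$ (these should be both-zero coordinates in the padded region). The parity-dependent floor in the statement strongly suggests a case split on the parity of $r$, which would manifest in the choice of how the padding $R$-letters are distributed between $u$ and $v$.

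The main obstacle is maintaining simultaneous validity of $u$ and $v$ as $r$ grows: the constraint that every $1$-run be followed by a strictly longer $0$-run tightly couples the structure of the two strings, so the shifted companion $v$ may fail to be run-constrained if the long $1^r$ block is placed too close to either endpoint. If the Hamming-distance approach happens to fall short for some values of $r$, a fallback is to invoke a path-counting argument on $\R_n$, writing any $u$-to-$v$ path as a sequence of bit flips and observing that, for each bit $i$, its total flip count $c_i$ satisfies $c_i \equiv u_i \oplus v_i \pmod 2$ with $\sum c_i$ equal to the path length; one then identifies forbidden intermediate configurations (those violating the run-constraint) that force certain bits to be flipped at least twice beyond the parity-required minimum, contributing the missing units of distance.
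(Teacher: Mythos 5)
Your overall strategy --- exhibit an explicit pair $u,v\in V(\R_n)$ and use $d_{\R_n}(u,v)\ge H(u,v)$ --- is exactly the paper's, and your model cases $r=2,3$ check out. But there is a genuine gap: the general construction is never specified, and the recipe you do describe (place the single large letter $1^r0^{r+1}$ at one end of $u$, pad with shorter $R$-letters, and let $v$ be a shifted complement) fails to produce the required Hamming distance once $r\ge 4$. For $r=4$ one has $n=13$ and needs $H(u,v)\ge 12$, i.e.\ at most one agreeing coordinate among the first $13$; but shifting the block $1^40^5$ past itself, e.g.\ $u00=(111100000)(100)(100)$ against $v00=(0)(0)(0)(0)(111100000)(0)(0)$, gives only $H(u,v)=10$, and no choice of padding letters does better, because a single long block shifted by $r$ leaves a large region where both strings are zero. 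The missing idea is a \emph{staircase} of runs of increasing lengths: the paper takes $u=10^21^30^4\cdots$ and $v=01^20^31^4\cdots$, alternating runs of lengths $1,2,\dots,r$ with the roles of $0$ and $1$ swapped between the two strings. These are bitwise complementary on the entire prefix of length $1+2+\cdots+r=r(r+1)/2$, each is automatically run-constrained since every $1$-run of length $k$ is followed by a $0$-run of length $k+1$, and only the short tail --- handled by the parity split on $r$ that you correctly anticipated --- contributes the $\left\lfloor \frac{r-1}{2}\right\rfloor$ agreeing coordinates.

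Your proposed fallback does not repair this: path-counting with forbidden intermediate configurations is a tool for showing $d(u,v)>H(u,v)$ for a \emph{given} pair, whereas the difficulty here is constructing a pair with large Hamming distance in the first place. Once the staircase pair is written down, nothing beyond the trivial inequality $d_{\R_n}(u,v)\ge H(u,v)$ is needed.
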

\begin{proof}
	It suffices to find two vertices $u,v \in V(\R_n)$ with 
	$d(u,v) = n- \lfloor \frac{r-1}{2} \rfloor$. 

	If $r$ is even, take 
	\begin{align*}
	u & = 10^21^30^4 \ldots 1^{r-1}0^r1^{r/2}0^{r/2+1},\\
	v & = 01^20^31^4 \ldots 0^{r-1} 1^r 0^{r+1},
	\end{align*}
	where the trailing $00$ is {\em not} omitted. In this case, 
	\begin{align*}
		n &= 1+2 + \cdots + r + (r-1) = \frac{1}{2} (r^2 + 3r -2),\\
		d(u,v) &= 1+ 2+  \cdots + r + \frac{r}{2} = 
n - \left(\frac{r}{2} - 1\right) = n- \left\lfloor \frac{r-1}{2} \right\rfloor.
	\end{align*}
	If $r$ is odd, then we take
	\begin{align*}
	u & = 10^21^30^4 \ldots 1^{r}0^{r+1},\\
	v & = 01^20^31^4 \ldots 0^r 1^{(r+1)/2 - 1} 0^{(r+1)/2 + 1},
	\end{align*}
	where the last $00$ is again not omitted. In this case, we get
	\begin{align*}
		n &= 1+2 + \cdots + r + (r-1) = \frac{1}{2} (r^2 + 3r -2), \\
		d(u,v) &= 1+ 2+  \cdots + r + \frac{r-1}{2} = n - \left( \frac{r-1}{2}\right) 
= n- \left\lfloor \frac{r-1}{2} \right\rfloor. %\qedhere
	\end{align*}
\end{proof} 

\begin{corollary}
	\label{cor:diam_special}
	If $ n = \frac{1}{2} (r^2 +3r -2)$, then $\diam(\R_n) \geq n - \sqrt{n/2}$.
\end{corollary}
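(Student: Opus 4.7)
The plan is to derive the bound directly from Lemma~\ref{lem:diam}, which already gives $\diam(\R_n) \geq n - \lfloor (r-1)/2 \rfloor$ under the same hypothesis $n = \frac{1}{2}(r^2 + 3r - 2)$. So it suffices to show that the integer $\lfloor (r-1)/2 \rfloor$ is at most $\sqrt{n/2}$, after which the corollary follows immediately.

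First I would rewrite $\sqrt{n/2}$ in terms of $r$: the hypothesis gives $n/2 = (r^2 + 3r - 2)/4$, hence $\sqrt{n/2} = \tfrac{1}{2}\sqrt{r^2 + 3r - 2}$. Next, since the floor satisfies $\lfloor (r-1)/2\rfloor \leq (r-1)/2$, it is enough to verify the cleaner inequality $(r-1)^2 \leq r^2 + 3r - 2$, which expands to $5r - 3 \geq 0$. This holds for all $r \geq 1$, and the degenerate case $r = 0$ can be excluded since it yields $n < 0$. Chaining these inequalities then gives $\lfloor (r-1)/2 \rfloor \leq \sqrt{n/2}$, so
\[
\diam(\R_n) \geq n - \lfloor (r-1)/2 \rfloor \geq n - \sqrt{n/2},
\]
completing the proof.

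There is essentially no obstacle: the construction in Lemma~\ref{lem:diam} has already done the real work of exhibiting far-apart vertices, and what remains is only the elementary algebraic comparison between the linear-in-$r$ quantity $\lfloor (r-1)/2 \rfloor$ and the quantity $\sqrt{n/2}$, which is of the same order $\Theta(r)$ under the quadratic relationship between $n$ and $r$.
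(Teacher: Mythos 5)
Your proof is correct and follows the same route as the paper: both deduce the corollary from Lemma~\ref{lem:diam} by checking $\left\lfloor \frac{r-1}{2} \right\rfloor \leq \frac{r-1}{2} \leq \sqrt{n/2}$, the last inequality reducing to $5r - 3 \geq 0$. You have simply written out the elementary verification that the paper leaves to the reader.
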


\begin{proof}
	This can be verified simply by checking that 
	$\sqrt{ \frac{n}{2}} \geq \frac{r-1}{2} \geq \left\lfloor \frac{r-1}{2} \right\rfloor	$.
\end{proof}

\begin{theorem}
	\label{thm:diam_lower}
	If $n \geq 1$, then $\diam(\R_n) > n-\sqrt{2n}$.
\end{theorem}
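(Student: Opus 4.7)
The plan is to extend the explicit construction of Lemma~\ref{lem:diam} by appending a carefully chosen tail whose length depends on how far $n$ exceeds the nearest value of the form $\frac{r^2+3r-2}{2}$. For $n \geq 1$, let $r \geq 1$ be the unique integer with
$$\frac{r^2+3r-2}{2} \leq n < \frac{(r+1)^2+3(r+1)-2}{2},$$
and set $t = n + 2 - \frac{r(r+1)}{2}$. The two defining inequalities translate immediately to $r + 1 \leq t \leq 2r + 2$.

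First, I would construct two vertices $u, v \in V(\R_n)$, treated as run-constrained strings of length $n+2$ with the trailing $00$ retained. Both begin with the alternating prefix of Lemma~\ref{lem:diam}: $r$ consecutive blocks of lengths $1, 2, \ldots, r$, where $u$ starts with a $1$ and $v$ starts with a $0$; these prefixes occupy the first $\frac{r(r+1)}{2}$ positions and differ in every one of them. If $r$ is even, the prefix of $u$ ends in $0^r$ and the prefix of $v$ ends in $1^r$; I append the tail $1^{\lfloor (t-1)/2\rfloor}\,0^{\,t - \lfloor (t-1)/2\rfloor}$ to $u$ and the all-zero tail $0^t$ to $v$. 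If $r$ is odd, the roles of $u$ and $v$ in the tail are swapped. A direct case-check verifies that both strings are valid run-constrained: the choice $k = \lfloor (t-1)/2\rfloor$ guarantees $t - k > k$, while $t \geq r + 1$ ensures that the $1^r$ block in the prefix can be followed by a strictly longer run of $0$s; finally $t \geq 2$ ensures the strings end in $00$.

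From the construction, $H(u, v) = \frac{r(r+1)}{2} + \lfloor (t-1)/2 \rfloor$, because the two strings disagree throughout the alternating prefix and in precisely the first $\lfloor (t-1)/2 \rfloor$ positions of the tail. Since $\R_n$ is an induced subgraph of $Q_{n+2}$, any path in $\R_n$ changes one bit at a time, so $d_{\R_n}(u, v) \geq H(u, v)$, giving $\diam(\R_n) \geq H(u,v)$. A short computation yields
$$n - H(u,v) \;=\; (t-2) - \lfloor (t-1)/2 \rfloor \;=\; \lfloor t/2 \rfloor - 1 \;\leq\; r,$$
using $t \leq 2r + 2$. On the other hand, the defining inequality $2n \geq r^2 + 3r - 2 > r^2$ (valid for $r \geq 1$) gives $\sqrt{2n} > r$. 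Combining these, $n - \diam(\R_n) \leq r < \sqrt{2n}$, i.e., $\diam(\R_n) > n - \sqrt{2n}$, as required.

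The main bookkeeping hurdle will be verifying the run-length constraints at the extremes $t = r+1$ and $t = 2r+2$ (where equality in the auxiliary inequalities is nearly attained), together with the parity split that swaps the tail roles of $u$ and $v$ depending on the parity of $r$. This is precisely the same split already present in Lemma~\ref{lem:diam}; the new feature is simply that the free parameter $t$ now ranges over an interval rather than taking a single value.
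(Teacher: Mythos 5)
Your proposal is correct and is essentially the paper's own argument: after the reparametrization $t = D + r + 1$, your vertices $u,v$ coincide with the paper's (note $\lfloor (t-1)/2\rfloor = \lceil (D+r-1)/2\rceil$ and $t-\lfloor(t-1)/2\rfloor = \lfloor (D+r+3)/2\rfloor$), and the closing estimate $\lfloor t/2\rfloor - 1 \leq r < \sqrt{2n}$ is the paper's inequality $\lfloor (r+D-1)/2\rfloor \leq r < \sqrt{2n}$ in disguise. The only cosmetic differences are that you fold the case $n = \tfrac{1}{2}(r^2+3r-2)$ into the general construction instead of citing Lemma~\ref{lem:diam}, and you carefully use $d(u,v)\ge H(u,v)$ where the paper asserts equality.
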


\begin{proof}
	If $n = \frac{1}{2} (r^2 +3r -2)$ for some integer $r$, then the result holds. Otherwise, 
we have 
	$$\frac{1}{2} (r^2 +3r -2) < n < \frac{1}{2} ((r+1)^2 +3(r+1) -2)~,$$
	which implies
	$$n = \frac{1}{2} (r^2 +3r -2) + D, \; \mbox{  with   } 1 \leq D \leq r+1~.$$
	We again aim to construct vertices $u, v \in V(\R_n)$ which differ on 
at least $n-\sqrt{2n}$ coordinates. The idea is to extend the last run of $r+1$ 
zeros in $u$ or $v$ from the proof of Lemma~\ref{lem:diam}, and add the 
maximum allowed number of $1$s followed by appropriate number of $0$s in the other vertex.
	
	If $r$ is even, then let (without omitting the $00$ at the tail end)
	\begin{align*}
	u & = 10^21^30^4 \ldots 1^{r-1}0^r1^{\lceil (D+r-1)/2 \rceil}0^{\lfloor (D+r+3)/2 \rfloor},\\
	v & = 01^20^31^4 \ldots 0^{r-1} 1^r 0^{D+r+1},
	\end{align*}
	while if $r$ is odd, let
	\begin{align*}
	u & = 10^21^30^4 \ldots 1^{r}0^{D+r+1},\\
	v & = 01^20^31^4 \ldots 0^r 1^{\lceil (D+r-1)/2 \rceil}0^{\lfloor (D+r+3)/2 \rfloor}.
	\end{align*}
	In both cases, we obtain 
	\begin{align*}
	n &= 1+2 + \cdots + r + (r +D -1) = \frac{1}{2} (r^2 + 3r -2) + D,\\
	d(u,v) &= 1+ 2+  \cdots + r + \left\lceil \frac{r + D - 1}{2} \right\rceil  
	= n - \left\lfloor \frac{r+D -1}{2} \right\rfloor.
	\end{align*}
	
	Now, it suffices to prove that $n - \left\lfloor \frac{r+D -1}{2} \right\rfloor > n - \sqrt{2 n}$. 
This holds, if we can prove that $\sqrt{2n} > \frac{r+D -1}{2}$, with $n = \frac{1}{2} (r^2 + 3r -2) + D$ for some $r \geq 1$, and $1 \leq D \leq r+1$. 
But since $D \geq 1$, we have $\sqrt{2n} \geq \sqrt{r^2 + 3r} > r$, and since $D \leq r+1$, we get $\frac{r+D -1}{2} \leq r$. Thus $\sqrt{2n} > \frac{r+D -1}{2}$, and indeed $d(u,v) > n - \sqrt{2n}$.
\end{proof}

We have calculated by computer the exact values of $ \diam (\R_n)$ for $n \leq 30$.
These values, together with the corresponding values of the lower bound of
Theorem~\ref{thm:diam_lower} are shown in Figure~\ref{fig:diameter_plot}.
From the actual values of the diameter we see that in the range given, 
the values for $n = 4, 8, 13, 19 ,26 $ corresponding to $r =2,3,4,5,6$ of 
Lemma~\ref{lem:diam} are actually exact.

\begin{figure}[ht]
	\begin{center}
		\includegraphics[width=0.8\textwidth]{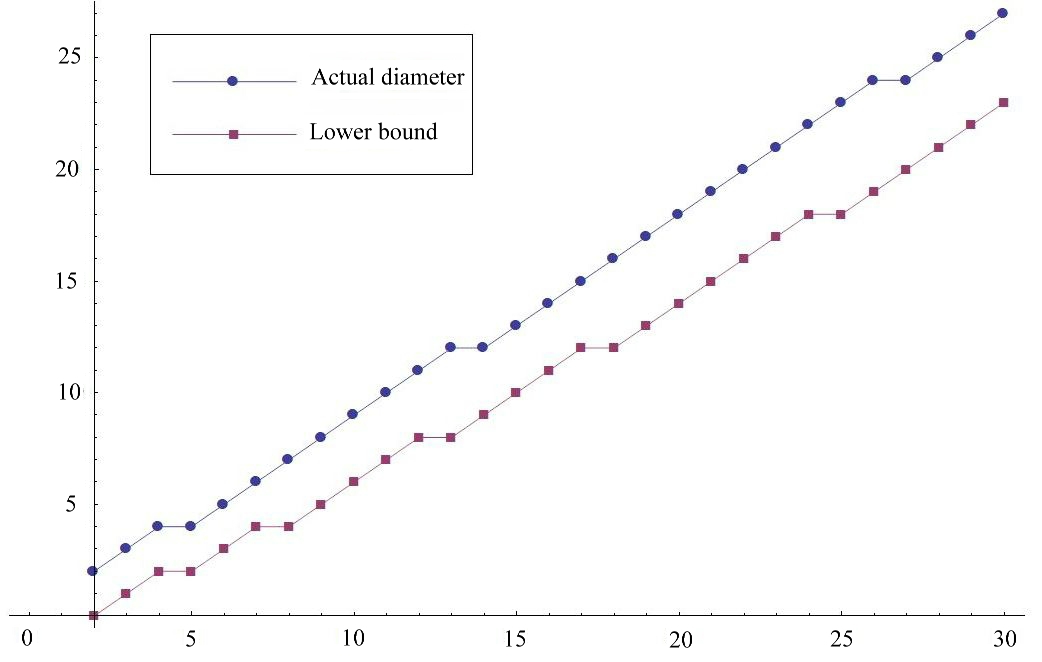}
	\end{center}
	\caption{The exact values and the lower bound given by 
Theorem~\ref{thm:diam_lower} for $\diam (\R_n)$ for $n \leq 30$.}
	\label{fig:diameter_plot}
\end{figure}

Based on this fact and the values in Figure~\ref{fig:diameter_plot},
we conjecture that the diameter is given by
\begin{equation}\label{diameter}
\diam ( \R_n) = n -   \left\lfloor \sqrt{1+ \frac{n}{2}} - \frac{3}{4} \right\rfloor ~.
\end{equation}

%%%%%%%%%%%%%%%%%%%%%%%%%%%%%%%%%%%%%%%%%%
%%%%%%%%%%%%%%%%%%%%%%%%%%%%%%%%%%%%%%%%%%
\section{Asymptotic results}
\label{sec:asymptotic}
%%%%%%%%%%%%%%%%%%%%%%%%%%%%%%%%%%%%%%%%%%
%%%%%%%%%%%%%%%%%%%%%%%%%%%%%%%%%%%%%%%%%%

We have seen that $|E(\R_n)| < |E(\Gamma_n)|$, so it is natural to consider 
the asymptotic behavior of the quotient $|E(\R_n)|/|E(\Gamma_n)|$. From the 
recursion in Corollary~\ref{cor:edges-recursion}, expression in equation~\eqref{eq:esges-fib} 
for $|E(\Gamma_n)|$, and the asymptotic behavior of Fibonacci numbers 
%(see Section~\ref{sec:prelim}), 
it follows that 
\begin{equation}
\label{eq:asymp-edges}
\lim_{n \rightarrow \infty} \frac{|E(\R_n)|}{|E(\Gamma_n)|} =
1-\lim_{n \rightarrow \infty} \frac{|E(\Gamma_{n-4})|}{|E(\Gamma_n)|} 
= \frac{1}{2} (3\sqrt{5}-5) \approx 0.854.
\end{equation}
So asymptotically, $\R_n$ has about four-fifths the number of edges 
of the Fibonacci cube $\Gamma_n$.

The asymptotic average degree of Fibonacci cube $\Gamma_n$ is known~\cite{klavzar+2013} to be 
\begin{equation}
\label{eq:asymp-avgdeg-fib}
\lim_{n \to \infty} \frac{\adeg(\Gamma_n)}{n} =  \left( 1- \frac{1}{\sqrt{5}} \right)  \approx 0.55.
\end{equation}
Using~\eqref{eq:asymp-avgdeg-fib} and~\eqref{eq:asymp-edges}, we obtain 
the asymptotic average degree of the Fibonacci-run graph $\R_n$  as
\begin{equation}\nonumber
%\label{eq:asymp-avgdeg-run}
\lim_{n \to \infty} \frac{\adeg(\R_n)}{n} = 2 \left( \sqrt{5}  -2 \right) \approx 0.47~,
\end{equation} 
a slightly lower value.
%%%%%%%%%%%%%%%%%%%%%%%%%%%%%%%%%%%%%%%%%%
%%%%%%%%%%%%%%%%%%%%%%%%%%%%%%%%%%%%%%%%%%
\section{Up--down degree sequences}
\label{sec:degree}
%%%%%%%%%%%%%%%%%%%%%%%%%%%%%%%%%%%%%%%%%%
%%%%%%%%%%%%%%%%%%%%%%%%%%%%%%%%%%%%%%%%%%

The degree sequences, i.e.\ the nature of the vertices of a given degree in a graph, 
has been well studied for Fibonacci cubes~\cite{klavzar+2011}. Fibonacci-run graphs can also be 
viewed as 
partially ordered sets whose structure is inherited from the 
Boolean algebra of subsets of $[n]$. The elements here correspond to 
all binary strings of length $n$ and the 
covering relation is flipping a 0 to a 1. 
Therefore in $\R_n$ we have a natural distinction between up- and down-degree of a vertex, 
denoted by $ \deg_{up}(v)$ and $\deg_{down}(v)$:
$ \deg_{up}(v)$ is the number of vertices $u$ in $\R_n$ obtained by changing a 0 to a 1, and 
$\deg_{down}(v)$ is the 
number of vertices $u$ in $ \R_n$ obtained from $v$ by changing a 1 to a 0.
We have $ \deg(v) = \deg_{up} (v)  + \deg_{down} (v) $.

The nature of the distribution of the up-degrees and the down-degrees are most easily seen 
from the Hasse diagram of $\R_n$ for which $ \deg_{up}(v)$ and 
$\deg_{down}(v)$ are simply the number of edges emanating up and down from $v \in \R_n$, respectively.

\begin{figure}[ht]
	\begin{center}
		\includegraphics[width=0.45\textwidth]{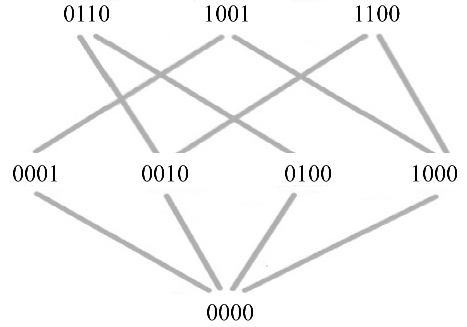}
	\end{center}
	\caption{The Hasse diagram of $\R_4$.}
	\label{fig:Hasse}
\end{figure}

From Figure~\ref{fig:Hasse},
the down-degree enumerator polynomial of $\R_4$ is given by
\begin{equation}\label{down_R4}
1+4d + 3 d^2 ~,
\end{equation}
and its up-degree enumerator polynomial by
\begin{equation}\label{up_R4}
3+2u+2 u^2 + u^4 ~.
\end{equation}
From Figure~\ref{fig:Hasse} we also find that
the degree-enumerator polynomial of $\R_4$ is 
\begin{equation}\label{degree_R4}
5x^2 + 2 x^3 + x^4 ~.
\end{equation}

%We take this polynomial to be $1$ for $\R_0$.

First, we consider the generating function for the down-degree enumerator polynomials for $\R_n$.
\begin{proposition}
	\label{prop:Rdown}
	The generating function for down-degree enumerator polynomial in Fibonacci-run graphs is
	$$\sum_{n \geq 1} 
	t^n \sum_{v\in \R_n } d^{\deg_{down} (v)} = 
	\frac{t (1 +d +dt +  (d^2-1)t^2  +d(d-1)t^3 + d(d-1)t^4}{1 -t -t^2 -(d-1)t^3 - d(d-1) t^5 }~.$$
\end{proposition}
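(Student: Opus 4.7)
The plan is to derive a recurrence for the polynomials $D_n(d) := \sum_{v \in V(\R_n)} d^{\deg_{down}(v)}$ from the decomposition in Lemma~\ref{lem:part-vertices}, and then convert that recurrence into the claimed rational generating function, using the convention $D_0(d) = 1$.

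First I would analyse, for each piece $1^k 0^{k+1} V(\R_{n-(2k+1)})$ of the decomposition, how the down-degree of a vertex $v = 1^k 0^{k+1} w$ splits. Because the prefix ends in $k+1 \geq 1$ zeros, a $1 \to 0$ flip inside $w$ produces a run-constrained string in $\R_n$ if and only if the corresponding flip is valid in $\R_{n-(2k+1)}$, so the $w$-part contributes exactly $d^{\deg_{down}(w)}$. For the prefix itself, a case analysis shows that only the leftmost and rightmost $1$ of the block $1^k$ may be flipped: flipping an interior $1$ at position $a$ with $1 < a < k$ creates a leading run $1^{a-1}$ followed by only a single $0$, violating the strict-inequality requirement. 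Consequently the prefix contributes $d^0, d^1, d^2, d^2, \ldots$ for $k = 0, 1, 2, 3, \ldots$, and the same reasoning shows that the isolated vertex $1^{\lceil n/2 \rceil} 0^{\lfloor n/2 \rfloor}$ contributes $d$ when $n \in \{1, 2\}$ and $d^2$ when $n \geq 3$.

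Summing these contributions over the pieces of the decomposition yields, for $n \geq 3$,
\[
D_n(d) = D_{n-1}(d) + d\, D_{n-3}(d) + d^2 \sum_{k=2}^{\lceil n/2 \rceil - 1} D_{n-(2k+1)}(d) + d^2,
\]
with initial values $D_1 = 1+d$ and $D_2 = 1+2d$ computed directly. Multiplying by $t^n$, summing over $n \geq 3$, and interchanging the order of summation in the double sum via $\sum_{k \geq 2} t^{2k+1} = t^5/(1-t^2)$, I obtain the functional equation
\[
D(t,d) \left[1 - t - dt^3 - \frac{d^2 t^5}{1-t^2}\right] = (1+d)t + dt^2 + dt^3 + \frac{d^2 t^3}{1-t} + \frac{d^2 t^5}{1-t^2}.
\]
Clearing the factor $1-t^2$ from both sides collapses the bracket on the left to $1 - t - t^2 - (d-1)t^3 - d(d-1)t^5$, which is exactly the denominator in the claim, while collecting coefficients on the right yields $t\bigl(1 + d + dt + (d^2-1)t^2 + d(d-1)t^3 + d(d-1)t^4\bigr)$, the stated numerator.

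The main obstacle will be the combinatorial step: rigorously verifying that interior flips inside a $1^k$ block are always forbidden by the run-length condition while both endpoint flips are always allowed (independently of what follows in $w$ and of what precedes the prefix in the decomposition), and correctly handling the asymmetric boundary behaviour of the isolated vertex, whose contribution drops from $d^2$ to $d$ when $\lceil n/2 \rceil = 1$. Once these combinatorial points are settled, the generating-function derivation is essentially bookkeeping.
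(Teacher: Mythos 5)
Your proof is correct, and all the key steps check out: the local analysis of which $1$s in a block $1^k0^{k+1}$ can be flipped down (both endpoints for $k\ge 2$, the single $1$ for $k=1$, never an interior $1$) is exactly the right combinatorial fact, the recurrence $D_n = D_{n-1} + d\,D_{n-3} + d^2\sum_{k\ge 2} D_{n-(2k+1)} + d^2$ reproduces the correct polynomials (e.g.\ $D_5 = 1+5d+7d^2$, $D_6 = 1+6d+12d^2+2d^3$), and your functional equation, after clearing the factor $1-t^2$, yields precisely the stated numerator and denominator. The route is different from the paper's in its packaging, though it rests on the same local flip analysis. The paper works directly with the free monoid generated by the alphabet $R$ of~\eqref{R}: each letter gets a weight ($t$ for $0$, $dt^3$ for $100$, $d^2t^{2k+1}$ for $1^k0^{k+1}$ with $k\ge 2$), the generating function of all run-constrained strings is $\bigl(1 - t - dt^3 - d^2t^5/(1-t^2)\bigr)^{-1}$, and one then subtracts $1+t+t^2$ and divides by $t^2$ to discard the short words and the trailing $00$. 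Note that the bracket in your functional equation is exactly this free-monoid denominator, so your recurrence is the "peel off the first letter" unwinding of the paper's one-line composition formula. What the paper's version buys is brevity and no boundary bookkeeping (no initial conditions $D_1, D_2$, no special-casing of the isolated vertex for $n\in\{1,2\}$); what your version buys is that it leans only on the graph decomposition of Lemma~\ref{lem:part-vertices} already established in Section~\ref{sec:decomposition}, and the small-$n$ verifications fall out for free. Your identified "main obstacle" is real but entirely manageable: the independence of the prefix flips from $w$ holds because the prefix ends in $k+1$ zeros and nothing precedes it, and the interior-flip prohibition is exactly the strict run-length inequality failing at a single separating $0$.
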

\begin{proof}
	We first consider the contributions of the strings from the alphabet $R$ of~\eqref{R}. 
The string $100$ contributes 
	$d t^3$, but longer strings $1^k 0^{k+1}$ of length $n$ contribute $d^2t^n$, since both the first and the last $1$ appearing can be switched to $0$. Thus, keeping track of the down-degree by the exponent of $d$ and the total length as the exponent of $t$, the strings from $R$ give 
	$$
	t + d t^3 + d^2 t^5 + d^2 t^7 + \cdots  = t + dt^3 + \frac{d^2 t^5}{1-t^2}~.
	$$
	Therefore the generating function of the free monoid, which includes vertices $V(\R_n), n \geq 1$, is 
	$$
	\frac{1}{
		1 -t - dt^3 - \frac{d^2 t^5}{1-t^2}}~.
	$$
	From this we need to subtract the terms $1, t, t^2$, which correspond to the null word, $0$ and $00$ respectively. 
This gives
	$$
	\frac{1}{
		1 -t - dt^3 - \frac{d^2 t^5}{1-t^2}} - 1 - t - t^2 = 
	\frac{t^3 (1 +d +dt +  (d^2-1)t^2  +d(d-1)t^3 + d(d-1)t^4}{1 -t -t^2 -(d-1)t^3 - d(d-1) t^5 } ~.
	$$
	Finally, we divide by $t^2$ to effectively shorten the length by 2 and get rid of 
the contribution of 
	the last two zeros in each run-constrained string generated to obtain the desired result.
\end{proof}

First few terms given by the generating function in Proposition~\ref{prop:Rdown} are
\begin{align*}
& (1+d) t + (1 + 2d ) t^2 + (1+3d+d^2) t^3 + (1+4d + 3 d^2) t^4 + (1+5d + 7 d^2) t^5\\
&  +(1+6d + 12 d^2 + 2d^3) t^ 6 + (1+7d+19d^2+7d^3) t^7 + \cdots
\end{align*}

Note that since the sum of down-degrees of the vertices in $\R_n$ is the total number 
of edges, we can obtain the generating function of the 
number of edges in $\R_n$ by differentiating the result 
of Proposition~\ref{prop:Rdown} with respect to $d$, and then 
setting $d=1$. This again gives the generating function obtained in Lemma~\ref{lem:edges}. \\

Along similar lines to the 
proof of Proposition~\ref{prop:Rdown}, we also obtain the generating function of the 
up-degree enumerator polynomials for $ \R_n$.
\begin{proposition}
	\label{prop:Rup}
	The generating function for up-degree enumerator polynomials of Fibonacci-run graphs is
$$
\sum_{n \geq 1} 
t^n \sum_{v\in \R_n } u^{\deg_{up}(v)} = \frac{t (1 +u - (u-2) t -2u t^2 + t^3 - (u-1)t^5 - (u-1)t^6 )}
{1- u t - 2 t^2 +(2 u-1) t^3 +t^4 - (u-1) t^5 + (u-1) t^7} ~.
$$
\end{proposition}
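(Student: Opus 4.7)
The proof follows the strategy of Proposition~\ref{prop:Rdown}, exploiting the free monoid structure on $R$ from~\eqref{R}, but the enumeration of valid up-flips is considerably more involved. Whereas each $1$ in the down-degree analysis contributes a local factor to the generating function, flipping a $0$ to a $1$ can create or extend a $1$-run whose validity depends on neighboring letters in the monoid decomposition.

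The plan is as follows. First, for each letter $\ell\in R$ and each $0$ within $\ell$, I would analyze the precise conditions under which the flip yields another run-constrained string. For $\ell=1^k 0^{k+1}$ with $k\geq 1$ these partition into three cases: the flip of the $0$ immediately following the $1$-run (extending $1^k$ to $1^{k+1}$ and requiring the subsequent $0$-run to still be strictly longer); interior $0$-flips (creating an isolated new $1$ of length one, requiring a sufficiently long $0$-run on each side); and the flip of the terminal $0$ of $\ell$, which may merge with the leading $1$s of the next letter. The single-$0$ letter is handled separately, being flippable only when the surrounding $0$-context is sufficiently long.

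Next, to localize these contextual dependencies, I would group each non-trivial letter together with the subsequent copies of the letter $0$ (up to the next non-trivial letter) into a \textit{block} $1^k 0^{k+1+s}$ with $k\geq 1$ and $s\geq 0$, treating an initial pure-$0$ prefix separately. Within each block, the validity of each candidate flip is expressible in terms of $k$, $s$, and a single bit of information about the next block (whether it exists and its leading parameter $k'$). This leads to a transfer-matrix-style recursion on the block generating functions, which I would solve in closed form and then combine with a pure-$0$ prefix generating function, following the skeleton $1/(1-E(u,t))$ as in the proof of Proposition~\ref{prop:Rdown}. Subtracting the small cases $\lambda$, $0$, and $00$, and dividing by $t^2$ to remove the mandatory trailing $00$, should yield the claimed rational form.

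The main obstacle is the case analysis at block boundaries: one must carefully account for the fact that a $0$ at the right end of a block, upon being flipped, can merge with the $1$-run of the next block, simultaneously altering the run-length structure in both blocks. This non-local interaction is responsible for the higher-degree denominator and numerator (of degrees $7$ and $6$ in $t$, respectively) compared to Proposition~\ref{prop:Rdown}, where all contributions are strictly local.
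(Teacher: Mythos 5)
Your overall strategy is the right one, and in fact it is the same one the paper intends: the text gives no actual derivation for Proposition~\ref{prop:Rup}, saying only that it follows ``along similar lines'' to Proposition~\ref{prop:Rdown}, and your diagnosis of why the up-degree case is harder is accurate. Flipping a $0$ is a non-local operation: for a letter $1^k0^{k+1}$ of $R$ from~\eqref{R} with $k\geq 1$, no $0$ of the letter itself can be flipped in isolation (extending the $1$-run to $1^{k+1}$ leaves only $k$ zeros, and an interior flip at position $j$ of the zero block would need $j-1>k$ while $j\leq k+1$), so every valid up-flip depends on the padding by subsequent $0$-letters and possibly on the next nontrivial letter. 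Your reorganization into blocks $1^k0^{k+1+s}$, with the observation that the only cross-block dependence is whether the next block has positive slack, is exactly the right way to restore a transfer-matrix structure, and it correctly explains the jump in degree of the numerator and denominator relative to Proposition~\ref{prop:Rdown}.

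The gap is that the proof is never carried out. Everything substantive is stated in the conditional (``I would analyze'', ``I would group'', ``should yield the claimed rational form''): you do not write down the weight of a block as a function of $k$, $s$, and the one bit of information about its successor; you do not set up or solve the resulting recursion; and you do not verify that the rational function so obtained equals the one claimed in the statement. That last verification is not a formality --- the claimed numerator and denominator are specific degree-$6$ and degree-$7$ polynomials, and the only way to know the bookkeeping is right is to produce them. Two further points need explicit care and are currently glossed over: (i) the trailing $00$ of a run-constrained string is not part of the vertex label, so the last block must be treated asymmetrically (its final two zeros are not flippable, and the ``flip the last $0$ and merge with the next $1$-run'' case has no analogue there); and (ii) a correct count of interior flips in a block $1^k0^{k+1+s}$ gives $\max(0,s-2)$ valid positions, with the first and last positions valid under the separate conditions $s\geq 2$ and ($s\geq 1$ together with positive slack in the next block) respectively --- these exact counts must enter the computation, and your sketch never commits to them. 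As it stands the proposal is a plausible plan, not a proof of the stated identity.
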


First few terms of the power series expansion of the generating function in 
Proposition~\ref{prop:Rup} are
\begin{align*}
& (1+u) t + (2+u^2 ) t^2 + (2+2u+u^3) t^3 + (3+2u+2u^2+u^4) t^4 \\ 
& + (5+2u+3u^2+2u^3+u^5) t^5 +(6+6u+2u^2+4u^3+2u^4+u^6) t^6 + \cdots
%+ (10+5u+9u^2+2u^3+5u^4+2u^5+u^7) t^7 + \cdots
\end{align*}

%Our general aim will be to study the generating function
It is desirable to study the generating function of the bivariate up-down degree enumerator 
polynomials
$$\sum_{v\in \R_n } u^{\deg_{up}(v)} d^{\deg_{down}(v)}~.$$
For example, for $n=4$, this polynomial is 
\begin{equation}\label{up-down_R4}
3 d^2 + 2du +2 du^2+u^4 ~,
\end{equation}
as can be verified by inspecting $\R_4$ in Figure
~\ref{fig:Hasse}.  The polynomial in~\eqref{up-down_R4} specializes to~\eqref{down_R4} for $u=1$, to~\eqref{up_R4} for $ d=1$, and to~\eqref{degree_R4} for $ u=d=x$.
More generally, the generating function of the up-down degree enumerator polynomials for 
$\R_n$ would give 
Proposition~\ref{prop:Rdown} and 
Proposition~\ref{prop:Rup} 
as corollaries, and provide the generating function for the degree enumerator polynomials 
itself for $ u=d=x$. The derivation of this general case is of 
independent interest and is studied in detail in a companion paper~\cite{paper2}. 

%%%%%%%%%%%%%%%%%%%%%%%%%%%%%%%%%%%%%%%%%%
%%%%%%%%%%%%%%%%%%%%%%%%%%%%%%%%%%%%%%%%%%
\section{Relation to hypercubes}
\label{sec:hypercubes}
%%%%%%%%%%%%%%%%%%%%%%%%%%%%%%%%%%%%%%%%%%
%%%%%%%%%%%%%%%%%%%%%%%%%%%%%%%%%%%%%%%%%%

Recall that partial cubes are an important and well-studied class of graphs. It is clear that Fibonacci-run graphs are subgraphs of hypercubes. 
So it is natural to ask whether they can be isometrically embedded in a hypercube. 

\begin{proposition}
	Fibonacci-run graph $\R_n$ is a partial cube if and only if $n \leq 6$. 
\end{proposition}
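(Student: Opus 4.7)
The plan is to invoke the standard Djokovi\'{c}–Winkler characterization: a connected bipartite graph is a partial cube if and only if every isometric cycle has length $4$. Since $\R_n$ is bipartite (bipartition by Hamming weight parity, cf.\ Lemma~\ref{lem:independence}), it suffices to exhibit, for each $n \geq 7$, an isometric cycle of length greater than $4$, and to verify that no such cycle exists in $\R_n$ for $n \leq 6$.

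For the direction $n \geq 7 \Rightarrow \R_n$ is not a partial cube, my plan is to display an induced, isometric $6$-cycle. A natural candidate lies along the first three coordinate directions, with the remaining tail of zeros long enough to make every block of ones run-constrained:
\begin{equation*}
u_1 = 0^n,\ \ u_2 = 10^{n-1},\ \ u_3 = 110^{n-2},\ \ u_4 = 1110^{n-3},\ \ u_5 = 0110^{n-3},\ \ u_6 = 0010^{n-3}.
\end{equation*}
I would check that each $u_i \in V(\R_n)$ (the binding constraint comes from $u_4$, where three consecutive ones need a strictly longer run of zeros after them), that consecutive $u_i u_{i+1}$ are edges of $\R_n$ while all non-adjacent pairs differ in at least two bits (so the cycle is chord-free), and finally that the Hamming distance between any two $u_i$ equals the cycle distance. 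Since Hamming distance is a lower bound for $d_{\R_n}$ and the cycle already realizes this lower bound, the $6$-cycle is isometric in $\R_n$, giving the required obstruction.

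For the direction $n \leq 6 \Rightarrow \R_n$ is a partial cube, I would split into cases. When $n \leq 4$ we have $\R_n \cong \Gamma_n$ (by Corollary~\ref{cor:non-iso}), and Fibonacci cubes are median graphs, hence partial cubes. For $n = 5$ and $n = 6$, the vertex counts $|V(\R_5)| = 13$ and $|V(\R_6)| = 21$ from Lemma~\ref{lem:vertices-bij} make the finite verification feasible: one shows that the natural inclusion $\R_n \hookrightarrow Q_n$ is isometric by checking, for each pair of vertices, that there is a path in $\R_n$ whose length equals the Hamming distance. The decomposition of Lemma~\ref{lem:part-vertices} gives a structural handle on this: a shortest path from $u$ to $v$ in $\R_n$ can be assembled by combining shortest paths inside each piece $1^k0^{k+1}\R_{n-2k-1}$ with the single ``connector'' edges between consecutive pieces.

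The main obstacle lies in the positive direction for $n = 5, 6$: one must rule out every possible isometric cycle of length greater than $4$, and there is no convenient isomorphism with a known partial cube to appeal to. A purely structural argument would need to track how new cycles are created when attaching the subgraphs $1^k 0^{k+1} \R_{n - 2k - 1}$ to $0 \R_{n-1}$ in the decomposition, and show that any such cycle either has length $4$ or admits a chord (hence fails to be isometric). Absent such an inductive argument, a computer-assisted enumeration of paths in these two small graphs suffices. The $n \geq 7$ direction, by contrast, is a direct verification on the six explicit vertices above.
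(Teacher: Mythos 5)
Your argument for the case $n \geq 7$ rests on a false characterization: it is not true that a connected bipartite graph is a partial cube if and only if every isometric cycle has length $4$. Partial cubes can, and typically do, contain isometric cycles of length greater than $4$: the even cycle $C_{2k}$ embeds isometrically into $Q_k$, and $Q_3$ itself contains the isometric $6$-cycle $000$--$001$--$011$--$111$--$110$--$100$--$000$ (each antipodal pair on this cycle is at Hamming distance $3$, each distance-$2$ pair at Hamming distance $2$). Hence exhibiting an isometric $6$-cycle in $\R_n$ proves nothing. Your own example makes the failure concrete: the six vertices $0^n$, $10^{n-1}$, $110^{n-2}$, $1110^{n-3}$, $0110^{n-3}$, $0010^{n-3}$ already lie in $\R_5$ and form an isometric $6$-cycle there, yet $\R_5$ \emph{is} a partial cube; so your argument would refute the very proposition it is meant to prove. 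The paper's proof instead uses Winkler's theorem (a connected bipartite graph is a partial cube if and only if the Djokovi\'{c}--Winkler relation $\theta$ is transitive) and, for $n \geq 7$, exhibits three edges $xy$, $pq$, $uv$ built from the substrings $111100000$, $011100000$, $100100000$ (padded by arbitrary run-constrained blocks $a$, $b$) with $xy \, \theta \, pq$ and $pq \, \theta \, uv$ but not $xy \, \theta \, uv$. These witnesses have length $7$ after dropping the trailing $00$, which is exactly what produces the threshold $n = 7$; any correct obstruction must likewise be impossible to realize in $\R_5$ and $\R_6$, and an obstruction already present in $\R_5$ is a sure sign something is wrong.

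The positive direction ($n \leq 6$) is handled in the paper exactly as you suggest, by a finite computer verification, so that part of your plan is fine; note only that the isomorphism $\R_n \cong \Gamma_n$ for $n \leq 4$ is observed in the discussion of Figure~\ref{fig:fibonacci_run}, not in Corollary~\ref{cor:non-iso}, which asserts non-isomorphism for $n \geq 5$. To repair the main direction you would need to replace the isometric-cycle criterion with one of the genuine characterizations of partial cubes --- Djokovi\'{c}'s convexity of the sets $W_{uv}$, or Winkler's transitivity of $\theta$ --- and then find a violation that genuinely requires $n \geq 7$.
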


\begin{proof}
	We can check with a computer that graphs $\R_n$ for $n \leq 6$ are partial cubes.
	
	Let $n \geq 7$. Winkler~\cite{winkler1984partial} proved that a connected graph is a partial cube if and only if it is bipartite and the Djokovi\'{c}-Winkler relation $\theta$ is transitive. Recall that for edges $xy, uv$ of a bipartite graph it holds $xy \, \theta \, uv \iff d(x,u) = d(y,v) \text{ and } d(x,v) = d(y,u)$. Let $a,b$ be arbitrary  run-constraint strings of appropriate lengths and take vertices in $\R_n$:
	\begin{align*}
	x = & a 111100000 b, \quad y = a 111000000b, \\
	p = & a 011100000 b, \quad q = a 011000000 b, \\
	u = & a 100100000 b, \quad v = a 100000000 b. 
	\end{align*}
	It can be checked that $xy, pq, uv \in E(\R_n)$, and that $xy \, \theta \, pq$ and $pq \, \theta \, uv$, but not $xy \, \theta \, uv$. Thus $\theta$ is not transitive and $\R_n$ is not a partial cube.
\end{proof}

Additionally, we remark that the Fibonacci-run graphs are in general not median graphs. They are median for $n \leq 4$, as those graphs are isomorphic to $\Gamma_n$, which are known to be median~\cite{klavzar2005}. However, $\R_n$ is not a median graph for $n \geq 5$. Consider the following triple of 
vertices:
\begin{align*}
u = & 111000^{n-5}, \\
v = & 100000^{n-5}, \\
w = & 001000^{n-5}~. 
\end{align*}
Clearly, $u,v,w\in V(\R_n)$. Since $\R_n$ is an isometric subgraph of a hypercube, the unique candidate for a median of $u, v, w$ is obtained in the same way as in a hypercube -- using the 
majority rule~\cite{imrich2000}. With this rule, a coordinate of the median vertex 
equals the value that appears at least in two of the vertices $u,v,w$ on that coordinate. 
Thus the unique candidate for a median is $ x = 101000^{n-5} \notin V(\R_n)$, hence $\R_n$ is not a median graph.

Next, we consider an analogous question. 
Instead of observing Fibonacci-run graphs as subgraphs of hypercubes, we consider 
the possible hypercubes which are subgraphs of a Fibonacci-run graph. A similar 
question has been studied for Fibonacci cubes as well, cf.\ results about cube polynomial 
listed in~\cite{klavzar2013-survey}, and generalizations in~\cite{Saygi2017} .

\begin{proposition}
	\label{prop:gCubes}
	If $h_{n,d,k}$ denotes the number of $k$-dimensional hypercubes $Q_k$ in 
	$\R_n$ whose distance to the all zero vertex $0^n$ is $d$, then the generating function 
$\displaystyle \sum_{n\geq 1} t^n \sum_{ d, k \geq 0 }
	h_{n,d,k} q^d x^k $ equals
	{\small
		$$
		\frac{t (1 +q+x +(q+x)t +  ((q+x)^2-1)t^2  +(q+x)(q+x-1)t^3 + (q+x)(q+x-1)t^4}
		{1 -t -t^2 -(q+x-1)t^3 - (q+x)(q+x-1) t^5 } ~.
		$$
	}
\end{proposition}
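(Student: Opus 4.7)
The plan is to mirror the proof of Proposition~\ref{prop:Rdown} under the formal substitution $d\mapsto q+x$, since the stated rational function is obtained from the one in Proposition~\ref{prop:Rdown} by exactly that substitution. First I would set up the following bijection. Every induced hypercube $Q_k\subseteq\R_n$ has a unique ``top'' vertex $v$---the one with a $1$ in every free coordinate---and its free coordinates must be a subset of the $\deg_{down}(v)$ flippable $1$s of $v$; if instead a non-flippable $1$ of $v$ were declared free, the subcube would contain a neighboring string obtained by switching that $1$ to $0$, which would lie outside $V(\R_n)$. Letting $F(v)$ denote the set of flippable $1$s of $v$, this gives a map from subcubes to pairs $(v,S)$ with $S\subseteq F(v)$, and the key technical claim is that every such pair actually determines a valid $Q_{|S|}$.

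To establish this claim I would use the canonical factorization of $v00$ over the alphabet $R$. Within a single letter $1^k0^{k+1}$ with $k\geq 2$ only the first and last $1$ are flippable, and the four possible joint toggles of these two positions produce $1^k0^{k+1}$, $01^{k-1}0^{k+1}$, $1^{k-1}0^{k+2}$, and $01^{k-2}0^{k+2}$, each of which is a valid run-constrained string because in every case the affected run of $1$s shrinks while its trailing run of $0$s lengthens, preserving the defining inequality. The letters $100$ and $0$ have at most one flippable $1$, so no joint condition is needed there. Flips in distinct letters do not interact, because each letter of $R$ ends in strictly more zeros than its $1$-block contains, so local modifications inside one letter cannot violate the run-length inequality across its boundary with the next. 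Hence every one of the $2^{|S|}$ joint flips of any $S\subseteq F(v)$ remains in $V(\R_n)$ and forms the asserted $Q_{|S|}$.

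Granting the bijection, each pair $(v,S)$ contributes $q^{|F(v)\setminus S|}x^{|S|}$ to $\sum h_{n,d,k}q^dx^k$: each flippable $1$ of $v$ is either kept fixed in the subcube template (a factor of $q$, one unit of distance) or declared a free coordinate (a factor of $x$, one unit of dimension). Summing over $S$ gives $(q+x)^{\deg_{down}(v)}$ per top vertex $v$, which matches the weighting used in Proposition~\ref{prop:Rdown} after the substitution $d\mapsto q+x$. From here the derivation is a verbatim repetition of the computation in Proposition~\ref{prop:Rdown}: the per-letter weight over $R$ becomes $P(t)=t+(q+x)t^3+(q+x)^2t^5/(1-t^2)$, the free monoid contributes $1/(1-P(t))$, subtracting $1+t+t^2$ removes the null word, the letter $0$, and the string $00$, and dividing by $t^2$ strips the mandatory trailing $00$ from every vertex label, producing the rational function in the statement. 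The main obstacle is the independence verification in the second paragraph; once that is in place, the identity reduces mechanically to the substitution $d\mapsto q+x$ in the earlier argument.
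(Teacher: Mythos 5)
Your overall route is the same as the paper's: reduce the statement to Proposition~\ref{prop:Rdown} via the substitution $d\mapsto q+x$, reading each down-flippable $1$ of a top vertex as contributing either a $q$ or an $x$. Your second paragraph, which uses the factorization over $R$ to verify that every subset $S\subseteq F(v)$ really does span a subcube lying entirely in $V(\R_n)$, is a genuine improvement: the paper asserts this without proof, and your check of the four joint toggles inside a letter $1^k0^{k+1}$ and of the non-interaction across letters is correct.

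There is, however, a gap in your third paragraph, at the step where you weight the pair $(v,S)$ by $q^{|F(v)\setminus S|}x^{|S|}$ on the grounds that each kept flippable $1$ contributes ``one unit of distance.'' The distance from the subcube to $0^n$ is attained at its bottom vertex $v-\sum_{i\in S}e_i$ and equals that vertex's Hamming weight $|v|_1-|S|$ (one checks $d_{\R_n}(u,0^n)=|u|_1$ by repeatedly deleting the last $1$ of the rightmost run). The non-flippable $1$s of $v$ survive in every vertex of the subcube and each also contributes one unit of distance, so the correct exponent of $q$ is $|v|_1-|S|$, not $|F(v)\setminus S|=\deg_{down}(v)-|S|$; the two differ as soon as $v$ contains a run $1^a$ with $a\geq 3$. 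Concretely, in $\R_5$ the vertex $v=11100$ has $F(v)=\{1,3\}$, and the square $\{11100,01100,11000,01000\}$ is at distance $1$ from $00000$, yet your weighting records it with $q^0x^2$; counting by actual distance, the $x^2$-coefficient at $t^5$ should be $6+q$, whereas the stated generating function gives $7x^2$. So what your computation correctly produces is the generating function for $\sum_{v}\binom{\deg_{down}(v)}{k}q^{\deg_{down}(v)-k}x^k$, which does not match $\sum_{d,k}h_{n,d,k}q^dx^k$ under the proposition's definition of $d$. To be fair, the paper's own proof contains the same conflation --- it first states the monomial as $q^{w-k}x^k$ with $w=|v|_1$ and then performs a substitution that yields $q^{r-k}x^k$ with $r=\deg_{down}(v)$ --- so you have faithfully reproduced the source's argument; but the step as you have written it is exactly the one that fails, and a correct refinement by distance would have to track the Hamming weight of $v$ alongside its down-degree.
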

\begin{proof}
	Let us keep track of all hypercubes $Q_k$ in $\R_n$. To each such 
subgraph, we associate the monomial $q^d x^k$, where $d$ is the distance of the $Q_k$ 
to the all zero vertex $0^n \in V(\R_n)$. For every vertex $v\in \R_n$, 
select $k$ $1$s in its string representation that can be replaced with a $0$. Note 
that the number of such $1$s is at most 
$\deg_{down} (v)$ (which is not necessarily equal to $\deg(v)$). 
By flipping these $1$s to $0$s in all possible ways, we obtain the vertices of 
a copy of $Q_k$ in $\R_n$. The distance of this hypercube 
	to $0^n$ is $w-k$, where $w= |v|_1$ is 
the Hamming weight of the vertex $v$. So from every vertex of Hamming 
weight $w$ and down-degree $r$, we obtain ${r\choose k}$ different copies of 
$Q_k$, with the associated monomial $q^{w-k} x^k$ for each. 
	
	The generating function can be obtained from Proposition~\ref{prop:Rdown} 
by replacing each $d^r$ that appears in the series expansion, by $(q+x)^r$. 
The reason for this is that the term $d^r$ arises from a vertex $v$ with down-degree $r$, 
and its contribution to the monomials making up the generating function is 
	$$
	\sum_{k=0}^r {r \choose k } q^{r-k} x^k = 
	(q+x)^r~.
	$$
	Thus this replacement is the same as if we replace $d$ by $q+x$ in the 
generating function in Proposition~\ref{prop:Rdown}, and this substitution gives the generating function of
the proposition. 
\end{proof}

The coefficients of the generating function in Proposition~\ref{prop:gCubes} for $ 1\leq n \leq 6 $ are
\begin{align*}
	&1+q+x,\\
	&1+2q + 2x,\\
	&1+3q+q^2 + (3 +2q) x + x^2,\\
	&1+4q+3q^2+ (4+6q)x + 3 x^2,\\
	&1+5q+7q^2+(5+14q)x + 7 x^2, \\
	& 1 + 6q + 12q^2 + 2q^3 + (6+24q+6q^2)x +(12+6q)x^2 + 2 x^3.
\end{align*}

So for example, the term $(12+6q)x^2$ in the last polynomial indicates that in $\R_6$, there are 
12 squares ($Q_2$'s) that contain the all zero vertex, and 6 squares whose distance 
to the all zero vertex is one.

%%%%%%%%%%%%%%%%%%%%%%%%%%%%%%%%%%%%%%%%%%
%%%%%%%%%%%%%%%%%%%%%%%%%%%%%%%%%%%%%%%%%%
\section{Hamiltonicity and further directions}
\label{sec:hamilton}
%%%%%%%%%%%%%%%%%%%%%%%%%%%%%%%%%%%%%%%%%%
%%%%%%%%%%%%%%%%%%%%%%%%%%%%%%%%%%%%%%%%%%

It is well known and follows from the fundamental decomposition of Fibonacci cubes, that $\Gamma_n$ has a Hamiltonian path for every $n \geq 0$~\cite{cong+1993}. However, surprisingly, we were not able to obtain a similar result for Fibonacci-run graphs. Properties of the graphs $\R_n$ for $1 \leq n \leq 12$ are presented in Table~\ref{tbl:hamilton}.

\begin{table}[ht]
	\begin{center}
		\begin{tabular}{c||c|c|c|c|c|c|c|c|c|c|c|c}
			$n$ & 1 & 2 & 3 & 4 & 5 & 6 & 7 & 8 & 9 & 10 & 11 & 12 \\ \hline
			cycle? & no & no & no & \checkmark & no & no & \checkmark & no & no & \checkmark & no & no \\ \hline
			path? & \checkmark & \checkmark & \checkmark & \checkmark & \checkmark & \checkmark & \checkmark & \checkmark & \checkmark & \checkmark & \checkmark & \checkmark \\
		\end{tabular}
	\end{center}
	\caption{Graphs $\R_n$ for $1 \leq n \leq 12$ which have a Hamiltonian cycle or a Hamiltonian path.}
	\label{tbl:hamilton}
\end{table}

Since the graphs $\R_n$ are bipartite (vertices with odd/even Hamming weight form the two parts), the graph can only be Hamiltonian if both parts are of equal size. Let $\Delta_n$ denote the number of vertices of even Hamming 
weight minus the 
number of vertices of odd Hamming weight in $\R_n$. Using Corollary~\ref{cor:vertices-weight}, we calculate that
$$
\Delta_n = 
\begin{cases}
0 & \text{if } n \equiv 1 \pmod{3},\\
-1  & \text{if } n \equiv 2,3 \pmod{6},\\
1 & \text{if } n \equiv 0, 5 \pmod{6}.
\end{cases}
$$

Therefore we have
\begin{lemma}
	\label{lem:hamilton_cycle}
	If $n \not\equiv 1 \pmod{3}$, then $\R_n$ does not contain a Hamiltonian cycle.
\end{lemma}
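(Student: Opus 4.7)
The strategy is to exploit the bipartite structure of $\R_n$ that the paper has already set up. Since the two parts of the bipartition are the vertices of even and odd Hamming weight, any Hamiltonian cycle would have to alternate between them and therefore require the two parts to have equal size; equivalently, it requires $\Delta_n = 0$. Hence the lemma is immediate from the displayed piecewise formula for $\Delta_n$, and the real work is to justify that formula. So my plan is to treat the sentence ``Using Corollary \ref{cor:vertices-weight}, we calculate that $\Delta_n = \ldots$'' as the step that needs a proper proof, and then add one line to conclude.

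Starting from Corollary \ref{cor:vertices-weight}, write
$$\Delta_n \;=\; \sum_{w=0}^{\lceil n/2 \rceil} (-1)^w \binom{n-w+1}{w}.$$
I would establish the second-order recurrence $\Delta_n = \Delta_{n-1} - \Delta_{n-2}$ for $n \geq 2$ by a single application of Pascal's identity $\binom{n-w+1}{w} = \binom{n-w}{w} + \binom{n-w}{w-1}$, splitting $\Delta_n$ into two sums. The first sum is of the same alternating binomial-coefficient shape as $\Delta_{n-1}$ up to a shift, and in the second sum the substitution $v = w-1$ produces the sign flip $(-1)^{v+1}$ together with $\binom{n-1-v}{v}$, which after extending the summation range down (the missing terms vanish) coincides with $-\Delta_{n-2}$. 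Telescoping these identifications gives the claimed recurrence; I might first derive it for the companion sequence $E_n = \sum_w (-1)^w \binom{n-w}{w}$ and observe $\Delta_n = E_{n+1}$, which keeps the bookkeeping cleaner.

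With the recurrence in hand, and the easily checked initial values $\Delta_0 = 1$ and $\Delta_1 = 0$ (verified by inspecting $\R_0$ and $\R_1$ directly, which have $1$ and $2$ vertices respectively), the characteristic polynomial $x^2 - x + 1$ has roots $e^{\pm i\pi/3}$, so $(\Delta_n)$ is purely periodic of period $6$. Reading off
$$(\Delta_0,\Delta_1,\Delta_2,\Delta_3,\Delta_4,\Delta_5) = (1,0,-1,-1,0,1)$$
and extending by periodicity reproduces exactly the three cases in the $\Delta_n$ formula. In particular $\Delta_n \in \{-1,+1\}$ whenever $n \not\equiv 1 \pmod{3}$, so the two parts of the bipartition differ in size and no Hamiltonian cycle can exist.

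The only delicate step is the index bookkeeping in the Pascal manipulation: one needs the upper bound $\lceil n/2 \rceil$ of the sum and the reindexing $v = w - 1$ to behave well, which they do because $\binom{n-w+1}{w} = 0$ once $w > \lceil n/2 \rceil + 1$. I expect no other obstacle; once the recurrence is established, periodicity modulo $6$ and the conclusion of the lemma are essentially automatic.
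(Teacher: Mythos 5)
Your proposal is correct and follows the paper's argument exactly: the paper also deduces the lemma from bipartiteness (even/odd Hamming weight) together with the imbalance $\Delta_n$ computed from Corollary~\ref{cor:vertices-weight}, merely leaving that computation implicit. Your explicit derivation of $\Delta_n = \Delta_{n-1}-\Delta_{n-2}$ via Pascal's identity and the resulting period-$6$ behaviour is a sound way to fill in that omitted calculation, and the values you obtain agree with the paper's piecewise formula.
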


Even though this lemma does not solve the Hamiltonicity of Fibonacci-run graphs completely, 
based on the decomposition in Figure~\ref{fig:decomposition},
and the the computational values we have obtained in 
Table~\ref{tbl:hamilton}, we  conjecture that 
$\R_n$ is  Hamiltonian if and only if $n \equiv 1 \pmod{3}$ and that
$\R_n$ always has a Hamiltonian path. 
We remark that 
the decomposition of $ \R_n$ in terms of smaller dimensional Fibonacci-run graphs 
as indicated in Figure~\ref{fig:decomposition}
indicates various approaches for the construction of the Hamiltonian paths 
inductively.
The decomposition would require the 
entry and the exit vertices of the Hamiltonian path for each smaller dimensional Fibonacci-run 
graph in a uniform manner. Such a construction is
a topic of further interest. \\

%%%%%%%%%%%%%%%%%%%%%%%%%%%%%%%%%%%%%%%%%%
%\section{Further directions}
%%%%%%%%%%%%%%%%%%%%%%%%%%%%%%%%%%%%%%%%%%
%%%%%%%%%%%%%%%%%%%%%%%%%%%%%%%%%%%%%%%%%%

Additionally,
it would be satisfying to find a proof of the expression
in Lemma~\ref{lem:edges} for the number of edges combinatorially, without 
using the decomposition of $\R_n$, and also prove the conjecture on the exact value 
of the diameter given by the expression in~\eqref{diameter}.
There is also the problem of the determination of the radius of $\R_n$.\\

We also conjecture that the lower bound in Lemma~\ref{lem:independence}
is actually tight.
Of course 
our conjecture on the existence of Hamiltonian paths, if true,  would automatically solve this problem.
% - Can $\alpha(\R_n)$ be determined exactly? What about $\gamma(\R_n)$?

%%%%%%%%%%%%%%%%%%%%%%%%%%%%%%%%%%%%%%%%%%
%%%%%%%%%%%%%%%%%%%%%%%%%%%%%%%%%%%%%%%%%%
\section*{Acknowledgments}
%%%%%%%%%%%%%%%%%%%%%%%%%%%%%%%%%%%%%%%%%%
%%%%%%%%%%%%%%%%%%%%%%%%%%%%%%%%%%%%%%%%%%
We would like to thank Prof. Sandi Klav\v{z}ar 
for enabling the cooperation of the authors of this article, and his early interest in the topic.
The first author would like to acknowledge the hospitality of Reykjavik University during his 
sabbatical stay there, during which a portion of this research was carried out.
%%%%%%%%%%%%%%%%%%%%%%%%%%%%%%%%%%%%%%%%%%%%%%%%%%%%%%%
%%%%%%%%%%%%%%%%%%%%%%%%%%%%%%%%%%%%%%%%%%%%%%%%%%%%%%%

\newpage
%\section*{Appendix}
%\appendix

%\section{Appendices}
\section*{Appendix: Figures of some Fibonacci-run graphs}
\begin{figure}[ht]
	\centering
	\includegraphics[width=0.4\textwidth]{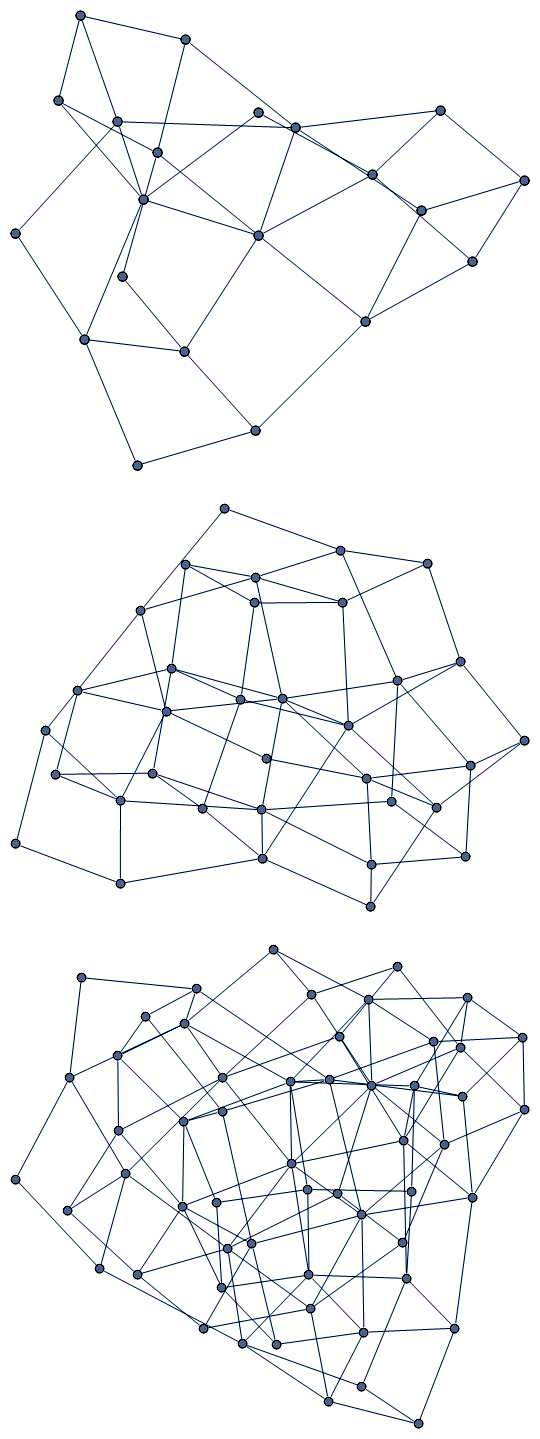}
	\caption{Fibonacci-run graphs $\R_6$, $\R_7$ and $\R_8$.\label{R8-8}}
\end{figure}

\begin{figure}[ht]
	\centering
	\includegraphics[width=0.5\textwidth]{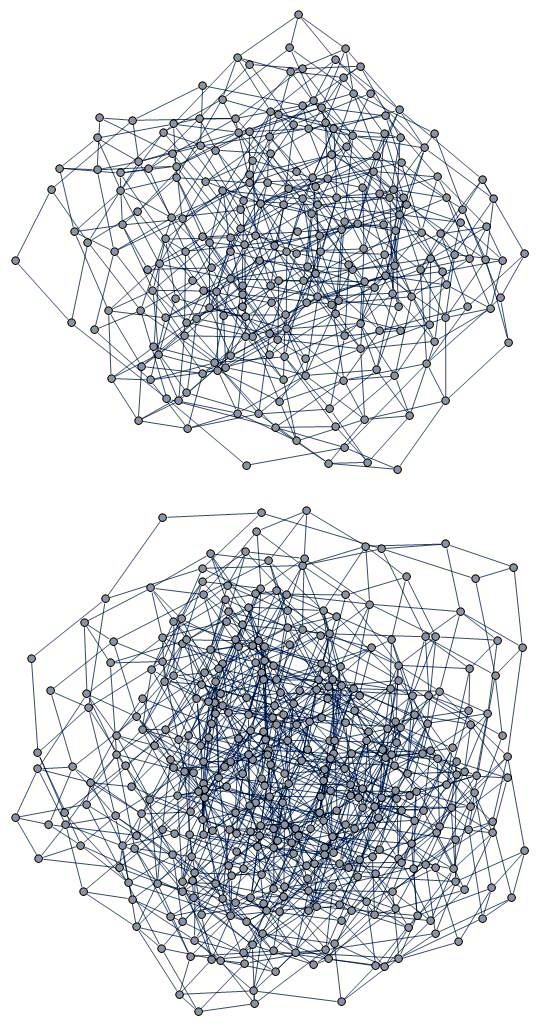}
	\caption{Fibonacci-run graphs $\R_{11}$ and $ \R_{12}$.\label{R11-12}}
\end{figure}


\begin{thebibliography}{10}
	
	\bibitem{azarija+2018}
	{\sc Azarija, J., Klav\v{z}ar, S., Rho, Y., and Sim, S.}
	\newblock On domination-type invariants of {F}ibonacci cubes and hypercubes.
	\newblock {\em Ars Math. Contemp. 14}, 2 (2018), 387--395.
	
	\bibitem{cong+1993}
	{\sc {Cong}, B., {Zheng}, S.~Q., and {Sharma}, S.}
	\newblock On simulations of linear arrays, rings and 2d meshes on fibonacci
	cube networks.
	\newblock In {\em [1993] Proceedings Seventh International Parallel Processing
		Symposium\/} (1993), pp.~748--751.
	
	\bibitem{paper2}
	{\sc E\u{g}ecio\u{g}lu, O., and Ir\v{s}i\v{c}, V.}
	\newblock Fibonacci-run graphs {II}: degree sequences.
	
	\bibitem{Egecioglu2020}
	{\sc E\u{g}ecio\u{g}lu, O., Sayg\i, E., and Sayg\i, Z.}
	\newblock {$k$}-{F}ibonacci cubes: a family of subgraphs of {F}ibonacci cubes.
	\newblock {\em Internat. J. Found. Comput. Sci. 31}, 5 (2020), 639--661.
	
	\bibitem{hsu1993}
	{\sc Hsu, W.~J.}
	\newblock Fibonacci cubes - new interconnection topology.
	\newblock {\em Parallel and Distributed Systems, IEEE Transactions on 4\/}
	(1993), 3 -- 12.
	
	\bibitem{ilic+2012}
	{\sc Ili\'{c}, A., Klav\v{z}ar, S., and Rho, Y.}
	\newblock Generalized {F}ibonacci cubes.
	\newblock {\em Discrete Math. 312}, 1 (2012), 2--11.
	
	\bibitem{imrich2000}
	{\sc Imrich, W., and Klav\v{z}ar, S.}
	\newblock {\em Product Graphs, Structure and Recognition}.
	\newblock 01 2000.
	
	\bibitem{klavzar2005}
	{\sc Klav\v{z}ar, S.}
	\newblock On median nature and enumerative properties of {F}ibonacci-like
	cubes.
	\newblock {\em Discrete Math. 299}, 1-3 (2005), 145--153.
	
	\bibitem{klavzar2013-survey}
	{\sc Klav\v{z}ar, S.}
	\newblock Structure of {F}ibonacci cubes: a survey.
	\newblock {\em J. Comb. Optim. 25}, 4 (2013), 505--522.
	
	\bibitem{klavzar+2013}
	{\sc Klav\v{z}ar, S., and Mollard, M.}
	\newblock Asymptotic properties of {F}ibonacci cubes and {L}ucas cubes.
	\newblock {\em Ann. Comb. 18}, 3 (2014), 447--457.
	
	\bibitem{klavzar+2019}
	{\sc Klav\v{z}ar, S., and Mollard, M.}
	\newblock Daisy cubes and distance cube polynomial.
	\newblock {\em European J. Combin. 80\/} (2019), 214--223.
	
	\bibitem{klavzar+2011}
	{\sc Klav\v{z}ar, S., Mollard, M., and Petkov\v{s}ek, M.}
	\newblock The degree sequence of {F}ibonacci and {L}ucas cubes.
	\newblock {\em Discrete Math. 311}, 14 (2011), 1310--1322.
	
	\bibitem{mollard2017}
	{\sc Mollard, M.}
	\newblock Non covered vertices in {F}ibonacci cubes by a maximum set of
	disjoint hypercubes.
	\newblock {\em Discrete Appl. Math. 219\/} (2017), 219--221.
	
	\bibitem{munarini+2001}
	{\sc Munarini, E., Perelli~Cippo, C., and Zagaglia~Salvi, N.}
	\newblock On the {L}ucas cubes.
	\newblock {\em Fibonacci Quart. 39}, 1 (2001), 12--21.
	
	\bibitem{savitha+2020}
	{\sc Savitha, K.~S., and Vijayakumar, A.}
	\newblock Some diameter notions of {F}ibonacci cubes.
	\newblock {\em Asian-Eur. J. Math. 13}, 3 (2020), 2050057, 9.
	
	\bibitem{saygi2019}
	{\sc Sayg{\i}, E.}
	\newblock On the domination number and the total domination number of
	{F}ibonacci cubes.
	\newblock {\em Ars Math. Contemp. 16}, 1 (2019), 245--255.
	
	\bibitem{Saygi2017}
	{\sc Sayg\i, E., and E\u{g}ecio\u{g}lu, O.}
	\newblock {$q$}-cube enumerator polynomial of {F}ibonacci cubes.
	\newblock {\em Discrete Appl. Math. 226\/} (2017), 127--137.
	
	\bibitem{saygi+2019}
	{\sc Sayg{\i}, E., and E\u{g}ecio\u{g}lu, O.}
	\newblock Boundary enumerator polynomial of hypercubes in {F}ibonacci cubes.
	\newblock {\em Discrete Appl. Math. 266\/} (2019), 191--199.
	
	\bibitem{winkler1984partial}
	{\sc Winkler, P.~M.}
	\newblock Isometric embedding in products of complete graphs.
	\newblock {\em Discrete Appl. Math. 7}, 2 (1984), 221--225.
	
	\bibitem{zeckendorf1972}
	{\sc Zeckendorf, E.}
	\newblock Repr\'{e}sentation des nombres naturels par une somme de nombres de
	{F}ibonacci ou de nombres de {L}ucas.
	\newblock {\em Bull. Soc. Roy. Sci. Li\`ege 41\/} (1972), 179--182.
	
\end{thebibliography}
\end{document}